\date{} 
\newcommand{\N}{\mbox{$I \kern -4pt N$}}      % simbolo de Naturales
\newcommand{\Z}{\mbox{$Z \kern -7.5pt Z$}}     % simbolo de Enteros
\newcommand{\Q}{\mbox{$Q \kern -8pt I$}}      % simbolo de Racionales
\newcommand{\R}{{\bf R}}
\newcommand{\C}{\mbox{$C \kern -8pt I$}}      % simbolo de Complejos
\newcommand{\vp}{\varphi}
\newcommand{\dxt}{dx\,dt}
\newcommand{\tell}{\tilde{\ell}}
\newcommand{\tpsi}{\widetilde\psi}
\font\dsrom=dsrom10 scaled 1200
\def \1{\textrm{\dsrom{1}}}
\def\om       {\omega}
\def\Om       {\Omega}
\newtheorem{theorem}{Theorem}[section]
\newtheorem{lemma}[theorem]{Lemma}
\newtheorem{proposition}{Proposition}
\theoremstyle{definition}
\newtheorem{remark}{Remark}
\numberwithin{equation}{section}
\begin{document} 

\title{ Local controllability of the N-dimensional Boussinesq system with N-1 scalar controls in an arbitrary control domain}

\author{Nicol\'as Carre\~no
\footnote{Universit\'e Pierre et Marie Curie, UMR 7598, Laboratoire Jacques-Louis Lions, F-75005, Paris, France;
{\bf ncarreno@ann.jussieu.fr}
  }
}

\maketitle

\begin{abstract} 
 In this paper we deal with the local exact controllability to a particular class of trajectories of the $N-$dimensional Boussinesq system with internal controls having $2$ vanishing components. The main novelty of this work is that no condition is imposed on the control domain.
\end{abstract} 

{\bf Subject Classification:} 34B15, 35Q30, 93C10, 93B05  
\vskip .2cm
{\bf Keywords:} Navier-Stokes system, Boussinesq system, exact controllability, Carleman inequalities

\section{Introduction}

Let $\Om$ be a nonempty bounded connected open subset of $\R^N$ ($N=2$ or $3$) of class $C^{\infty}$.
Let $T>0$ and let $\om\subset\Om$ be a (small) nonempty open subset which is the control domain. We will 
use the notation $Q=\Om\times(0,T)$ and $\Sigma=\partial\Om\times (0,T)$. 

We will be concerned with the following controlled Boussinesq system:
\begin{equation}\label{eq:Bouss}
\left\lbrace \begin{array}{ll}
    y_t - \Delta y + (y\cdot \nabla)y + \nabla p = v\1_{\om} + \theta\,e_N& \mbox{ in }Q, \\
    \theta_t - \Delta \theta + y\cdot \nabla\theta = v_0\1_{\om} & \mbox{ in }Q, \\
    \nabla\cdot y = 0 & \mbox{ in }Q, \\
    y = 0,\, \theta=0 & \mbox{ on }\Sigma, \\
    y(0) = y^0,\, \theta(0)=\theta^0 & \mbox{ in }\Om,
 \end{array}\right.
\end{equation}
where
$$
e_N = \left\lbrace
\begin{array}{ll}
(0,1)& \mbox{ if }N=2,\\
(0,0,1)& \mbox{ if }N=3
\end{array}\right.
$$ 
stands for the gravity vector field, $y=y(x,t)$ represents the velocity of the particules of the fluid, $\theta=\theta(x,t)$ their temperature and $(v_0,v)=(v_0, v_1,\dots,v_N)$ stands for the control which acts over the set $\om$.

Let us recall the definition of some usual spaces in the context of incompressible fluids:
$$V=\{y\in H^1_0(\Om)^N:\,\nabla\cdot y=0 \mbox{ in }\Om  \}$$
and
$$H=\{y\in L^2(\Om)^N:\,\nabla\cdot y=0 \mbox{ in }\Om,\,y\cdot n =0 \mbox{ on }\partial\Om\}.$$

This paper concerns the local exact controllability to the trajectories of system \eqref{eq:Bouss} at time $t=T$ with a reduced number of controls. To introduce this concept, let us consider $(\bar y, \bar \theta)$ (together with some pressure $\bar p$) a trajectory of the following uncontrolled Boussinesq system:
\begin{equation}\label{eq:ncBouss}
\left\lbrace \begin{array}{ll}
    \bar y_t - \Delta \bar y + (\bar y\cdot \nabla)\bar y + \nabla \bar p = \bar \theta\,e_N & \mbox{ in }Q, \\
    \bar \theta_t - \Delta \bar \theta + \bar y\cdot \nabla \bar \theta = 0 & \mbox{ in }Q, \\
    \nabla\cdot \bar y = 0 & \mbox{ in }Q, \\
    \bar y = 0,\, \bar \theta=0 & \mbox{ on }\Sigma, \\
    \bar y(0) = \bar y^0,\, \bar \theta(0)=\bar \theta^0 & \mbox{ in }\Om.
 \end{array}\right.
\end{equation}
We say that the local exact controllability to the trajectories $(\bar y, \bar \theta)$ holds if there exists a number $\delta >0$ such that if $\|(y^0,\theta^0)-(\bar y^0,\bar\theta^0)\|_X\leq \delta$ ($X$ is an appropriate Banach space), there exist controls $(v_0,v)\in L^2(\om\times (0,T))^{N+1}$ such that the corresponding solution $(y,\theta)$ to system \eqref{eq:Bouss} matches $(\bar y,\bar\theta)$ at time $t=T$, i.e.,
\begin{equation}\label{conditionT}
y(T)=\bar y(T) \mbox{ and }\theta(T)=\bar\theta(T)\mbox{ in }\Om.
\end{equation}

The first results concerning this problem were obtained in \cite{FurIma98} and \cite{FurIma99}, with $N+1$ scalar controls acting in the whole boundary of $\Om$ and with $N+1$ scalar controls acting in $\om$ when $\Om$ is a torus, respectively. Later, in \cite{Gue}, the author proved the local exact controllability for less regular trajectories $(\bar y, \bar \theta)$ in an open bounded set and for an arbitrary control domain. Namely, the trajectories were supposed to satisfy
\begin{equation}\label{regularity}
(\bar y, \bar \theta) \in L^{\infty}(Q)^{N+1},\, (\bar y_t, \bar \theta_t)\in L^2(0,T;L^r(\Om))^{N+1},
\end{equation}
with $r>1$ if $N=2$ and $r>6/5$ if $N=3$.

In \cite{E&S&O&P-N-1}, the authors proved that local exact controllability can be achieved with $N-1$ scalar controls acting in $\om$ when $\overline \om$ intersects the boundary of $\Om$ and \eqref{regularity} is satisfied. More precisely, we can find controls $v_0$ and $v$, with $v_N \equiv 0$ and $v_k\equiv 0$ for some $k<N$ ($k$ is determined by some geometric assumption on $\om$, see \cite{E&S&O&P-N-1} for more details), such that the corresponding solution to \eqref{eq:Bouss} satisfies \eqref{conditionT}. 

In this work, we remove this geometric assumption on $\om$ and consider a target trajectory of the form $(0, \bar p, \bar \theta )$, i.e.,  
\begin{equation}\label{eq:ncBouss0}
\left\lbrace \begin{array}{ll}
    \nabla \bar p = \bar \theta\,e_N & \mbox{ in }Q, \\
    \bar \theta_t - \Delta \bar \theta  = 0 & \mbox{ in }Q, \\
    \bar \theta=0 & \mbox{ on }\Sigma, \\
    \bar \theta(0)=\bar \theta^0 & \mbox{ in }\Om,
 \end{array}\right.
\end{equation}
where we assume
\begin{equation}\label{regularity0}
 \bar \theta \in L^{\infty}(0,T;W^{3,\infty}(\Om))\mbox{ and }\nabla\bar\theta_t \in L^{\infty}(Q)^N.
\end{equation}

The main result of this paper is given in the following theorem.
\begin{theorem}\label{teo:controlBouss}
Let $i<N$ be a positive integer and $(\bar p, \bar\theta)$ a solution to \eqref{eq:ncBouss0} satisfying \eqref{regularity0}. Then, for every $T>0$ and $\om\subset\Om$, there exists $\delta>0$ such that for every $(y^0,\theta^0)\in V\times H^1_0(\Om)$ satisfying
$$\|(y^0,\theta^0)-(0,\bar\theta^0)\|_{V\times H^1_0}\leq \delta,$$
we can find controls $v^0\in L^2(\om\times (0,T))$ and $v\in L^2(\om\times (0,T))^N$, with $v_i\equiv 0$ and $v_N\equiv 0$, such that the corresponding solution to \eqref{eq:Bouss} satisfies \eqref{conditionT}, i.e.,
\begin{equation}\label{conditionT0}
y(T)=0 \mbox{ and }\theta(T)=\bar\theta(T)\mbox{ in }\Om.
\end{equation}
\end{theorem}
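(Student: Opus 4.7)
The plan is to follow the classical scheme for local exact controllability to trajectories of Navier--Stokes-type systems: linearize \eqref{eq:Bouss} around the target $(0,\bar p,\bar\theta)$, establish null controllability of the linearization under the structural constraint $v_i\equiv v_N\equiv 0$, and conclude via a local inversion theorem. Setting $z=y$ and $\kappa=\theta-\bar\theta$ and subtracting \eqref{eq:ncBouss0}, one obtains a linear Boussinesq-like system for $(z,\kappa)$ with quadratic source terms; the matching condition \eqref{conditionT0} is then equivalent to null controllability of $(z,\kappa)$ at time $T$. As in \cite{Gue} and \cite{E&S&O&P-N-1}, I would encode this as the surjectivity of a $C^1$ map between suitably weighted Banach spaces (with weights singular at $t=T$) and invoke a Liusternik--Graves-type inverse mapping theorem, reducing the problem to the null controllability of the linearized system, with source terms in a compatible weighted space, by controls satisfying $v_i\equiv v_N\equiv 0$.

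By duality, this linear controllability is equivalent to an observability inequality for the adjoint system
\[
-\vp_t-\Delta\vp+\nabla\pi=\psi\,\nabla\bar\theta,\quad -\psi_t-\Delta\psi=\vp_N,\quad \nabla\cdot\vp=0\quad\text{in }Q,
\]
with Dirichlet boundary conditions and final data at $t=T$. Because two components of the velocity control must vanish, the admissible observation involves only $\psi$ and the components $\vp_j$ with $j\ne i,N$. The key ingredient is therefore a Carleman estimate of the form
\[
I(\vp,\psi)\le C\jjnt_{\om_0\times(0,T)} e^{-2s\alpha}\rho\left(|\psi|^2+\sum_{j\ne i,N}|\vp_j|^2\right),
\]
where $\alpha$ is a standard Fursikov--Imanuvilov weight singular at $t=0,T$, $\rho$ is a polynomial factor in the Carleman parameter $s$, $\om_0\Subset\om$, and $I(\vp,\psi)$ denotes the usual weighted energy of the adjoint state.

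The heart of the argument, and the main obstacle, is the derivation of this Carleman inequality. I would start from a global Stokes Carleman estimate for $\vp$ (in the spirit of Imanuvilov--Puel--Fern\'andez-Cara) and a global heat Carleman estimate for $\psi$. Adding them and exploiting the regularity \eqref{regularity0} of $\bar\theta$, the coupling term $\psi\,\nabla\bar\theta$ is absorbed into the left-hand side, leaving local integrals over $\om_0$ of every component of $\vp$ and of $\psi$. The local integral of $\vp_N$ is removed using the identity $\vp_N=-\psi_t-\Delta\psi$: integrating $|\vp_N|^2 e^{-2s\alpha}\rho$ against a cut-off supported in $\om_1\Subset\om$ and transferring derivatives onto the weights and $\psi$, the resulting terms are absorbed by the $\psi$-Carleman left-hand side plus an enlarged local $\psi$-integral. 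Eliminating the local integral of $\vp_i$ is the truly delicate point, since no geometric assumption on $\om$ allows one to use a boundary-localization trick as in \cite{E&S&O&P-N-1}. Here I would exploit the momentum equation together with $\nabla\cdot\vp=0$: by applying an appropriate pressure-free differential operator (typically a component of curl) one expresses $\vp_i$ locally in terms of $\psi$ and of the observed components $\vp_j$ with $j\ne i,N$ and their derivatives; nested cut-offs on a chain $\om_0\Subset\om_1\Subset\om_2\Subset\om$, combined with integration by parts and large powers of $s$, then absorb the unwanted term into the observed ones and into the Carleman left-hand side.

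Once this Carleman inequality is in hand, a standard minimization argument in a dual space produces controls $(v_0,v)$ with $v_i\equiv v_N\equiv 0$ driving the linearized state to zero at time $T$, with the weighted integrability tailored to the inverse function theorem. The Liusternik--Graves step then yields Theorem~\ref{teo:controlBouss} for initial data sufficiently close to $(0,\bar\theta^0)$ in $V\times H^1_0(\Om)$.
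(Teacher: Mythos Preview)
Your global architecture --- reduce to null controllability of the linearized system around $(0,\bar p,\bar\theta)$, prove an observability/Carleman inequality for the adjoint with observations only in $\psi$ and $\vp_j$ ($j\ne i,N$), build the controlled solution in weighted spaces via a Lax--Milgram/variational argument, and close with a Liusternik--Graves inverse mapping theorem --- matches the paper exactly, as does your treatment of the local $\vp_N$-term via the heat equation $-\psi_t-\Delta\psi=\vp_N$.

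The divergence is in how the Carleman estimate is obtained, and in particular how the local $\vp_i$-term is avoided. You propose to start from a standard Stokes Carleman for $\vp$ (which produces local terms in \emph{all} components of $\vp$) and then eliminate $\vp_i$ locally by ``a component of curl'' and nested cut-offs. This is precisely the step that, in the earlier literature (\cite{E&S&O&P-N-1}), required the geometric assumption $\overline\omega\cap\partial\Omega\ne\emptyset$; your sketch does not indicate how to bypass that obstruction for arbitrary $\omega$. The paper does \emph{not} try to eliminate a local $\vp_i$-term at all. Following \cite{CorGue,CarrGue}, it decomposes $\rho\vp=w+z$ (with $\rho=e^{-\frac32 s\alpha^*}$), where $w$ solves a Stokes problem with source $\rho g$ and $z$ solves a Stokes problem with source $-\rho'\vp-\tpsi\nabla\bar\theta$; since $\Delta\pi_z=-\nabla\cdot(\tpsi\nabla\bar\theta)$, applying $\nabla\Delta$ to the equations for the ``good'' components $z_k$, $k\ne i$, yields heat equations for $\nabla\Delta z_k$ to which the Carleman inequality for parabolic equations with \emph{nonhomogeneous} boundary data (Lemma~\ref{lemma:Cnonhom}) is applied. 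The global weighted control of $\nabla z_k$, $k\ne i$, then gives global control of $z_i$ via $\partial_i z_i=-\sum_{k\ne i}\partial_k z_k$ and Poincar\'e, so no local term in $\vp_i$ ever appears. The boundary terms generated by the nonhomogeneous Carleman are absorbed through Stokes regularity estimates for suitably weighted $z$. In short, your outline is correct at the level of strategy, but the crucial technical device that makes the result work for arbitrary $\omega$ is this decomposition/``$\nabla\Delta$ on selected components'' mechanism, not a local curl-based elimination.
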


\begin{remark}
Notice that when $N=2$ we only need to control the temperature equation.
\end{remark}

\begin{remark}
It would be interesting to know if the local controllability to the trajectories with $N-1$ scalar controls holds for $\bar y\neq 0$ and $\om$ as in Theorem \ref{teo:controlBouss}. However, up to our knowledge, this is an open problem even for the case of the Navier-Stokes system. 
\end{remark}

\begin{remark}
One could also try to just control the movement equation, that is, $v_0 \equiv 0$ in \eqref{eq:Bouss}. However, this system does not seem to be controllable. To justify this, let us consider the control problem
\begin{equation*}
\left\lbrace \begin{array}{ll}
    y_t - \Delta y + (y\cdot \nabla)y + \nabla p = v\1_{\om} + \theta\,e_N& \mbox{ in }Q, \\
    \theta_t - \Delta \theta + y\cdot \nabla\theta = 0 & \mbox{ in }Q, \\
    \nabla\cdot y = 0 & \mbox{ in }Q, \\
    y = 0,\, \nabla\theta\cdot n=0 & \mbox{ on }\Sigma, \\
    y(0) = y^0,\, \theta(0)=\theta^0 & \mbox{ in }\Om;
 \end{array}\right.
\end{equation*}
where we have homogeneous Neumann boundary conditions for the temperature. Integrating in $Q$, integration by parts gives
\begin{equation*}
 \int\limits_{\Om}\theta(T)\,dx = \int\limits_{\Om} \theta_0\,dx,
\end{equation*}
so we can not expect in general null controllability.
\end{remark}

Some recent works have been developed in the controllability problem with reduced number of controls. For instance, in \cite{CorGue} the authors proved the null controllability for the Stokes system with $N-1$ scalar controls, and in \cite{CarrGue} the local null controllability was proved for the Navier-Stokes system with the same number of controls. 
% The exact controllability to a trajectory $(\bar y,\bar p)$ of the uncontrolled Navier-Stokes system
% \begin{equation*}
% \left\lbrace \begin{array}{ll}
%     \bar y_t - \Delta \bar y + (\bar y\cdot \nabla)\bar y + \nabla \bar p = 0 & \mbox{ in }Q, \\
%     \nabla\cdot \bar y = 0 & \mbox{ in }Q, \\
%     \bar y = 0, & \mbox{ on }\Sigma, \\
%     \bar y(0) = \bar y^0 & \mbox{ in }\Om
%  \end{array}\right.
% \end{equation*}
% with a control having one vanishing component in an interior control domain is an open problem.

The present work can be viewed as an extension of \cite{CarrGue}. To prove Theorem \ref{teo:controlBouss} we follow a standard approach introduced in \cite{FurIma} and \cite{OlegN-S} (see also \cite{E&S&O&P}). We first deduce a null controllability result for the linear system
\begin{equation}\label{eq:linearBouss}
\left\lbrace \begin{array}{ll}
    y_t - \Delta y  + \nabla p = f + v\1_{\om} + \theta\,e_N& \mbox{ in }Q, \\
    \theta_t - \Delta \theta + y\cdot \nabla\bar\theta = f_0 + v_0\1_{\om} & \mbox{ in }Q, \\
    \nabla\cdot y = 0 & \mbox{ in }Q, \\
    y = 0,\, \theta=0 & \mbox{ on }\Sigma, \\
    y(0) = y^0,\, \theta(0)=\theta^0 & \mbox{ in }\Om,
 \end{array}\right.
\end{equation}
where $f$ and $f_0$ will be taken to decrease exponentially to zero in $t=T$. 

The main tool to prove this null controllability result for system \eqref{eq:linearBouss} is a suitable Carleman estimate for the solutions of its adjoint system, namely,
\begin{equation}\label{eq:adjointBouss}
\left\lbrace \begin{array}{ll}
    -\vp_t - \Delta \vp  + \nabla \pi = g - \psi\nabla\bar\theta& \mbox{ in }Q, \\
    -\psi_t - \Delta \psi = g_0 + \vp_N & \mbox{ in }Q, \\
    \nabla\cdot \vp = 0 & \mbox{ in }Q, \\
    \vp = 0,\, \psi=0 & \mbox{ on }\Sigma, \\
    \vp(T) = \vp^T,\, \psi(T)=\psi^T & \mbox{ in }\Om,
 \end{array}\right.
\end{equation}
 where $g\in L^2(Q)^N$, $g_0\in L^2(Q)$, $\vp^T\in H$ and $\psi^T\in L^2(\Om)$. In fact, this inequality is of the form
\begin{multline}\label{eq:0}
\iint\limits_{Q}  \widetilde \rho_1(t)(|\vp|^2 + |\psi|^2) dx\,dt \\
\leq  C \left( \iint\limits_{Q}  \widetilde \rho_2(t)(|g|^2 +|g_0|^2 ) dx\,dt 
+   \int\limits_0^T\int\limits_{\om} \widetilde \rho_3(t) |\vp_j|^2 dx\,dt 
 +   \int\limits_0^T\int\limits_{\om} \widetilde \rho_4(t) |\psi|^2 dx\,dt \right),
\end{multline}
if $N=3$, and of the form
\begin{multline*}
\iint\limits_{Q}  \widetilde \rho_1(t)(|\vp|^2 + |\psi|^2) dx dt 
\leq  C \left( \iint\limits_{Q}  \widetilde \rho_2(t)(|g|^2 +|g_0|^2 ) dx dt 
 +   \int\limits_0^T\int\limits_{\om} \widetilde \rho_4(t) |\psi|^2 dx dt \right),
\end{multline*}
if $N=2$, where $j=1$ or 2 and $\widetilde\rho_k(t)$ are positive smooth weight functions (see inequalities \eqref{eq:Carleman3} and \eqref{eq:Carleman2} below). From these estimates, we can find a solution $(y,\theta,v,v_0)$ of \eqref{eq:linearBouss} with the same decreasing properties as $f$ and $f_0$. In particular, $(y(T),\theta(T))=(0,0)$ and $v_i=v_N=0.$ 

We conclude the controllability result for the nonlinear system by means of an inverse mapping theorem.

This paper is organized as follows. In section 2, we prove a Carleman inequality of the form \eqref{eq:0} for system \eqref{eq:adjointBouss}. In section 3, we deal with the null controllability of the linear system \eqref{eq:linearBouss}. Finally, in section 4 we give the proof of Theorem \ref{teo:controlBouss}.

% % % % % % % % % % % % % % % % % % % % % % % % 
% % % % % % % Section 2
% % % % % % % % % % % % % % % % % % % % % % % %  

\section{Carleman estimate for the adjoint system}

In this section we will prove a Carleman estimate for the adjoint system \eqref{eq:adjointBouss}. In order to do so, we are going to introduce some weight functions. Let $\om_0$ be a nonempty open subset of $\R^N$ such that $\overline{\om_0}\subset \om$ and $\eta\in C^2(\overline{\Om})$ such that
\begin{equation}
 |\nabla \eta|>0 \mbox{ in }\overline{\Om}\setminus\om_0,\, \eta>0 \mbox{ in }\Om \mbox{ and } \eta \equiv 0 \mbox{ on }\partial\Om.
\end{equation}
The existence of such a function $\eta$ is given in \cite{FurIma}.
Let also $\ell\in C^{\infty}([0,T])$ be a positive function satisfying 
\begin{equation}
\begin{split}
&\ell(t) = t \quad \forall t \in [0,T/4],\,\ell(t) = T-t \quad \forall t \in [3T/4,T],\\
& \ell(t)\leq \ell(T/2),\, \forall t\in [0,T].
\end{split}
\end{equation}

Then, for all $\lambda\geq 1$ we consider the following weight functions:
\begin{equation}\label{pesos}
\begin{split}
&\alpha(x,t) = \dfrac{e^{2\lambda\|\eta\|_{\infty}}-e^{\lambda\eta(x)}}{\ell^{8}(t)},\, \xi(x,t)=\dfrac{e^{\lambda\eta(x)}}{\ell^8(t)},\\
&\alpha^*(t) = \max_{x\in\overline{\Om}} \alpha(x,t),\, \xi^*(t) = \min_{x\in\overline{\Om}} \xi(x,t),\\
&\widehat\alpha(t) = \min_{x\in\overline{\Om}} \alpha(x,t),\, \widehat\xi(t) = \max_{x\in\overline{\Om}} \xi(x,t).
\end{split}
\end{equation}

Our Carleman estimate is given in the following proposition.
\begin{proposition}\label{prop:Carleman3}
Assume $N=3$, $\om\subset\Om$ and $(\bar p, \bar \theta)$ satisfies \eqref{regularity0}. There exists a constant $\lambda_0$, such that for any $\lambda\geq\lambda_0$ there exist two constants $C(\lambda)>0$ and $s_0(\lambda)>0$ such that for any $j\in\{1,2\}$, any $g\in L^2(Q)^3$, any $g_0\in L^2(Q)$, any $\vp^T \in H$ and any $\psi^T \in L^2(\Om)$, the solution of \eqref{eq:adjointBouss} satisfies
\begin{multline}\label{eq:Carleman3}
s^4\iint\limits_{Q}  e^{-5s\alpha^*}  (\xi^*)^4 |\vp|^2 dx\,dt + s^5\iint\limits_{Q}  e^{-5s\alpha^*}  (\xi^*)^5 |\psi|^2 dx\,dt \\
\leq  C \left( \iint\limits_{Q}  e^{-3s \alpha^*}(|g|^2 +|g_0|^2 ) dx\,dt 
+ s^7  \int\limits_0^T\int\limits_{\om}e^{-2s\widehat\alpha - 3s\alpha^*}(\widehat\xi)^7|\vp_j|^2 dx\,dt \right. \\
\left. +  s^{12}  \int\limits_0^T\int\limits_{\om}e^{-4s\widehat\alpha - s\alpha^*}(\widehat\xi)^{49/4}|\psi|^2 dx\,dt \right)
\end{multline}
for every $s\geq s_0$.
\end{proposition}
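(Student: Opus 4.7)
The plan is to combine two separate Carleman estimates---one for the heat equation satisfied by $\psi$ and one derived from the Stokes-type system for $\vp$ after elimination of the pressure---and then exploit the divergence-free condition to reduce the local observation of $\vp$ to a single component $\vp_j$. This adapts the strategy of \cite{CarrGue} for the Navier--Stokes system with $N-1$ controls, with extra care needed to handle the two-way coupling with the temperature equation.

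First, I would apply the standard Fursikov--Imanuvilov Carleman estimate to $\psi$, viewed as the solution of $-\psi_t - \Delta\psi = g_0 + \vp_N$ with Dirichlet data, obtaining a bound of the form
\begin{equation*}
\iint\limits_Q e^{-2s\alpha}\bigl(s\xi|\nabla\psi|^2 + s^3\xi^3|\psi|^2\bigr)\,dx\,dt \leq C\left(\iint\limits_Q e^{-2s\alpha}(|g_0|^2+|\vp_N|^2)\,dx\,dt + s^3 \int_0^T\!\!\int_{\om_0} e^{-2s\alpha}\xi^3|\psi|^2\,dx\,dt\right).
\end{equation*}
To handle $\vp$ without the pressure, I would take the Laplacian of the momentum equation: since $\nabla\cdot\vp=0$ gives $\Delta\pi = \nabla\cdot(g-\psi\nabla\bar\theta)$, each component $\Delta\vp_k$ satisfies a scalar parabolic equation whose source depends only on $g$, $\psi$, $\nabla\psi$ and derivatives of $\bar\theta$ up to third order. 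Applying a Carleman inequality for the heat operator to each $\Delta\vp_k$ and then recovering $\vp$ itself via the elliptic regularity of the Stokes problem produces an inequality controlling the global left-hand side of \eqref{eq:Carleman3} by an integral of $|g|^2 + |\psi|^2|\nabla\bar\theta|^2$ plus local observations of second-order derivatives of all components of $\vp$.

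The real difficulty lies in eliminating the local observations of $\vp_k$ for $k\neq j$---including $k=N$---and replacing them by a local observation of the single component $\vp_j$. Using the identity $\pa_N\vp_N = -\sum_{k<N}\pa_k\vp_k$ coming from incompressibility, the terms involving $\vp_N$ can be re-expressed in terms of $\vp_1,\dots,\vp_{N-1}$; then, via a cascade of cut-off functions with nested supports inside $\om$, repeated integrations by parts in space and in time convert the remaining local derivatives of $\vp_k$ ($k\ne j$) into local values of $\vp_j$, at the cost of larger powers of $s$ and of $\widehat\xi$. This bookkeeping is precisely what produces the large exponents $s^7\widehat\xi^{\,7}$ and $s^{12}\widehat\xi^{\,49/4}$ appearing in the final local terms of \eqref{eq:Carleman3}.

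Finally, I would glue the two Carleman estimates and absorb the coupling contributions for $s$ large: the cross term $|\vp_N|^2$ on the right-hand side of the $\psi$-estimate is absorbed by the left-hand side of the $\vp$-estimate, while the cross term $|\psi|^2|\nabla\bar\theta|^2$ on the right-hand side of the $\vp$-estimate is absorbed by the $\psi$-estimate using hypothesis \eqref{regularity0}. The main obstacle throughout is the component-reduction step: each successive integration by parts raises the weight exponents in a way that must be tracked carefully to avoid boundary contributions and to keep every intermediate term either absorbable on the left or of the form prescribed by \eqref{eq:Carleman3}. The temperature coupling, absent in \cite{CarrGue}, further complicates matters since $\psi$ appears as a source in the equation for $\vp$ and $\vp_N$ appears as a source in the equation for $\psi$; the simultaneous absorption of these two cross terms, together with the matching of weights, is what dictates the specific exponents in the statement.
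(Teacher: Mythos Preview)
Your outline misses the central mechanism by which the local observation of $\vp_N$ is removed. The divergence-free relation $\partial_N\vp_N=-\sum_{k<N}\partial_k\vp_k$ gives you \emph{one} linear relation among the three components, so it lets you eliminate the local term in \emph{one} component only (in the paper it is used to recover $\vp_2$ from $\vp_1$ and $\vp_3$). It cannot by itself reduce the observation to a single $\vp_j$; a ``cascade of cut-offs'' will not convert a local integral of $\vp_2$ into one of $\vp_1$. The way the paper gets rid of the local $\vp_3$ term is entirely different: since $\vp_3$ is the source in the heat equation for $\psi$, one writes (after introducing the weighted function $\tpsi=\rho\psi$)
\[
\rho\,\vp_3=-\tpsi_t-\Delta\tpsi-\rho g_0+\rho'\psi,
\]
multiplies the local integrand $\zeta_1 e^{-2s\alpha}\xi^7\rho\,\vp_3$ by this expression, and integrates by parts in $t$ and $x$. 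This trades the local $\vp_3$ term for local terms in $\psi$ and $g_0$, and is precisely where the exponent $s^{12}\widehat\xi^{\,49/4}$ on the local $\psi$ term comes from. This coupling trick (already in \cite{E&S&O&P-N-1}) is the key step you are missing.

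A second gap is the treatment of the pressure/boundary terms. Applying a standard Carleman inequality to $\Delta\vp_k$ is not legitimate because $\Delta\vp_k$ does not vanish on $\partial\Om$. The paper first performs the decomposition $\rho\vp=w+z$ (with $\rho=e^{-3s\alpha^*/2}$), where $w$ absorbs $\rho g$ via Stokes regularity, and then applies the Carleman estimate with \emph{nonhomogeneous} boundary data (Lemma~\ref{lemma:Cnonhom}) to $\nabla\Delta z_k$, $k=1,3$. The resulting boundary terms in $H^{1/4,1/2}(\Sigma)$ are not free; they are absorbed only after a two-step bootstrap using the Stokes regularity estimates \eqref{eq:regularity1}--\eqref{eq:regularity2} on suitably reweighted versions of $z$. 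None of this machinery appears in your plan, and without it the Carleman estimate for $\vp$ simply does not close.
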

 
For the sake of completeness, let us also state this result for the $2$-dimensional case.
\begin{proposition}\label{prop:Carleman2}
Assume $N=2$, $\om\subset\Om$ and $(\bar p, \bar \theta)$ satisfies \eqref{regularity0}. There exists a constant $\lambda_0$, such that for any $\lambda\geq\lambda_0$ there exist two constants $C(\lambda)>0$ and $s_0(\lambda)>0$ such that for any $g\in L^2(Q)^2$, any $g_0\in L^2(Q)$, any $\vp^T \in H$ and any $\psi^T \in L^2(\Om)$, the solution of \eqref{eq:adjointBouss} satisfies
\begin{multline}\label{eq:Carleman2}
s^4\iint\limits_{Q}  e^{-5s\alpha^*}  (\xi^*)^4 |\vp|^2 dx\,dt + s^5\iint\limits_{Q}  e^{-5s\alpha^*}  (\xi^*)^5 |\psi|^2 dx\,dt \\
\leq  C \left( \iint\limits_{Q}  e^{-3s \alpha^*}(|g|^2 +|g_0|^2 ) dx\,dt 
+  s^{12}  \int\limits_0^T\int\limits_{\om}e^{-4s\widehat\alpha - s\alpha^*}(\widehat\xi)^{49/4}|\psi|^2 dx\,dt \right)
\end{multline}
for every $s\geq s_0$.
\end{proposition}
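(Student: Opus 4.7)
The plan is to prove Proposition \ref{prop:Carleman2} by adapting the argument for the three-dimensional case (Proposition \ref{prop:Carleman3}) to $N=2$, exploiting the additional rigidity of two-dimensional incompressible flows in order to eliminate any local term in the velocity $\vp$. The general blueprint is the one introduced in \cite{CarrGue} for Navier--Stokes and extended to the Boussinesq setting in \cite{E&S&O&P-N-1}: apply a Fursikov--Imanuvilov type Carleman inequality to the Stokes part of the adjoint system \eqref{eq:adjointBouss} for $\vp$ (after eliminating the pressure $\pi$ via the divergence-free constraint), apply a standard heat-equation Carleman estimate to the equation for $\psi$, add the two estimates, and absorb the coupling terms $\psi\nabla\bar\theta$ and $\vp_N=\vp_2$ into the left-hand side thanks to the large parameter $s$ together with the $L^{\infty}$ bounds on $\bar\theta$ provided by \eqref{regularity0}.

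After this first step, one obtains an intermediate estimate of the same form as \eqref{eq:Carleman3} with $N=2$ (and necessarily $j=1$), in which a local integral of $|\vp_1|^2$ with weight $s^{7}e^{-2s\widehat\alpha-3s\alpha^{*}}(\widehat\xi)^{7}$ survives on the right-hand side. The crucial two-dimensional step is to absorb this term into the local $|\psi|^2$ term appearing in \eqref{eq:Carleman2}. For this I would use the incompressibility relation $\partial_1\vp_1=-\partial_2\vp_2$ combined with the identity $\vp_2 = -\psi_t - \Delta\psi - g_0$ coming from the second equation of \eqref{eq:adjointBouss}, so that every occurrence of $\vp$ in $\om$ can in principle be rewritten in terms of $\psi$ and $g_0$. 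Concretely, choosing $\om_0\Subset\om_1\Subset\om$ and a cut-off $\chi\in C^{\infty}_{c}(\om_1)$ with $\chi\equiv 1$ on $\om_0$, I would integrate $\chi\,\vp_1^{2}$ against the appropriate time weight, perform several integrations by parts to move derivatives onto $\psi$, the cut-off, and the weight functions, and apply Young's inequality. The resulting bound dominates the $|\vp_1|^2$ local term by terms absorbed on the left (through factors of $s$) plus the two terms already present on the right of \eqref{eq:Carleman2}: the global term in $|g_0|^2$ and the local term in $|\psi|^2$ with weight $s^{12}(\widehat\xi)^{49/4}e^{-4s\widehat\alpha-s\alpha^{*}}$.

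The main obstacle I expect is the careful bookkeeping of the weights. At every integration by parts, derivatives land on $\widehat\alpha$, $\alpha^{*}$, $\widehat\xi$, $\xi^{*}$ and on $\chi$, each producing new powers of $s$ and $\widehat\xi$ together with exponentially small factors. One has to verify that all such terms either fit into the target weight $s^{12}(\widehat\xi)^{49/4}e^{-4s\widehat\alpha-s\alpha^{*}}$ on the local side, or are small with respect to the dominant left-hand side weights $s^{4}e^{-5s\alpha^{*}}(\xi^{*})^{4}$ and $s^{5}e^{-5s\alpha^{*}}(\xi^{*})^{5}$ (so that they can be absorbed once $s$ is large). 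A subsidiary difficulty is the treatment of the pressure $\pi$ in the Stokes equation used for the intermediate estimate, which is handled as in \cite{CorGue} by applying a suitable differential operator (essentially a curl) in order to eliminate $\nabla\pi$ before testing against the weight. Once these estimates are put together, \eqref{eq:Carleman2} follows for $s\geq s_0$ and $\lambda\geq \lambda_0$ large enough.
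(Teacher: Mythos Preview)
Your proposal contains a genuine gap at what you call the ``crucial two-dimensional step''. You plan to first obtain an intermediate estimate carrying a local term in $|\vp_1|^2$ and then eliminate it using $\partial_1\vp_1=-\partial_2\vp_2$ together with $\vp_2=-\psi_t-\Delta\psi-g_0$. But the divergence-free relation only gives you $\partial_1\vp_1$ in terms of $\psi$ and $g_0$; it does not let you recover $\vp_1$ itself locally in $\om$, where no boundary information on $\vp_1$ is available. Integrating by parts in $\int_0^T\!\int_\om \chi\, e^{-2s\widehat\alpha-3s\alpha^*}(\widehat\xi)^7|\vp_1|^2\,dx\,dt$ cannot convert $|\vp_1|^2$ into something controlled by $\partial_1\vp_1$ without a Poincar\'e-type inequality, and none is available on the support of the cut-off $\chi$. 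Your claim that ``every occurrence of $\vp$ in $\om$ can in principle be rewritten in terms of $\psi$ and $g_0$'' is therefore not justified for $\vp_1$.

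The route implicit in the paper avoids this difficulty by never generating a local $|\vp_1|^2$ term. In the decomposition $\rho\vp=w+z$ of \eqref{eq:w}--\eqref{eq:tpsi}, one applies the Carleman machinery of Lemmas \ref{lemma:Cnonhom}, \ref{lemma1} and \ref{lemma2} \emph{only} to $\nabla\Delta z_2$, which produces a single velocity local term, in $|z_2|^2$. The missing component $z_1$ is then recovered \emph{globally} from $\partial_1 z_1=-\partial_2 z_2$ together with the Dirichlet condition $z_1|_{\Sigma}=0$, exactly as $z_2$ is recovered from $z_1$ and $z_3$ in the three-dimensional proof (see the step leading to \eqref{eq:Carlemannonh}); this places the weighted $|z_1|^2$ term on the left-hand side without any local contribution. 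The only surviving velocity local term is the one in $\rho^2|\vp_2|^2$, and since $\vp_2=\vp_N$ is precisely the source in the equation for $\tpsi$, it is eliminated by the same integration-by-parts argument used for $\vp_3$ in three dimensions. Hence \eqref{eq:Carleman2} follows with only the local $|\psi|^2$ term on the right, and the extra elimination step you propose is neither needed nor, as written, available.
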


To prove Proposition \ref{prop:Carleman3} we will follow the ideas of \cite{CorGue} and \cite{E&S&O&P-N-1} (see also \cite{CarrGue}). An important point in the proof of the Carleman inequality established in \cite{CorGue} is that the laplacian of the pressure in the adjoint system is zero. In \cite{CarrGue}, a decomposition of the solution was made, so that we can essentially concentrate in a solution where the laplacian of the pressure is zero. For system \eqref{eq:adjointBouss} this will not be possible because of the coupling term $\psi\nabla\bar\theta$. However, under hypothesis \eqref{regularity0} we can follow the same ideas to obtain \eqref{eq:Carleman3}. All the details are given below.

\subsection{Technical results}

Let us present now the technical results needed to prove Carleman inequalities \eqref{eq:Carleman3} and \eqref{eq:Carleman2}. The first of these results is a Carleman inequality for parabolic equations with nonhomogeneous boundary conditions proved in \cite{ImaPuelYam}. Consider the equation
\begin{equation}\label{eq:heatnonhom}
 u_t - \Delta u = F_0 + \sum_{j=1}^N \partial_j F_j \mbox{ in }Q,
\end{equation}
where $F_0,F_1,\dots,F_N\in L^2(Q)$. We have the following result.

\begin{lemma}\label{lemma:Cnonhom}
There exists a constant $\widehat\lambda_0$ only depending on $\Om$, $\om_0$, $\eta$ and $\ell$ such that for any $\lambda>\widehat\lambda_0$ there exist two constants $C(\lambda)>0$ and $\widehat{s}(\lambda)$, such that for every $s\geq \widehat{s}$ and every $u\in L^2(0,T;H^1(\Om))\cap H^1(0,T;H^{-1}(\Om))$ satisfying \eqref{eq:heatnonhom}, we have
\begin{multline}\label{eq:Cnonhom}
 \dfrac{1}{s}\iint\limits_Q e^{-2s\alpha} \dfrac{1}{\xi}|\nabla u|^2 \dxt 
+ s\iint\limits_Q e^{-2s\alpha} \xi |u|^2 \dxt 
\leq C\left(  s\int\limits_0^T\int\limits_{\om_0}e^{-2s\alpha}\xi|u|^2 \dxt \right.\\
\qquad\qquad\qquad +s^{-1/2} \left\|e^{-s\alpha}\xi^{-1/4}u\right\|^2_{H^{\frac{1}{4},\frac{1}{2}}(\Sigma)} 
+ s^{-1/2} \left\|e^{-s\alpha}\xi^{-1/8}u\right\|^2_{L^2(\Sigma)} \\
\left. 
+ s^{-2}\iint\limits_Q e^{-2s\alpha}\xi^{-2}|F_0|^2\dxt 
 +\sum_{j=1}^N\iint\limits_Q e^{-2s\alpha}|F_j|^2 \dxt \right).
\end{multline}
\end{lemma}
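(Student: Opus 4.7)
The plan is to prove this by combining three classical ingredients: the Fursikov--Imanuvilov global Carleman estimate for the heat operator with homogeneous Dirichlet data and $L^2$ source, an anisotropic trace-lifting argument to reduce to the homogeneous-boundary case, and a duality/integration-by-parts argument to handle the divergence-form source $\sum_{j=1}^N \partial_j F_j$. The statement is that of Imanuvilov--Puel--Yamamoto~\cite{ImaPuelYam}, whose proof I would closely follow.

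First I would reduce to the case $u|_\Sigma = 0$. Let $U$ be an anisotropic Lions--Magenes trace lift of $u|_\Sigma$, so that $U \in L^2(0,T;H^{1/2}(\Om)) \cap H^{1/4}(0,T;L^2(\Om))$ with $U|_\Sigma = u|_\Sigma$, and multiply $U$ by a smooth cutoff so that its support sits in a neighborhood of $\pa \Om$ disjoint from $\om_0$. Then $\widetilde u := u - U$ vanishes on $\Sigma$ and satisfies $\widetilde u_t - \Delta \widetilde u = F_0 + \sum_j \partial_j F_j - U_t + \Delta U$. To $\widetilde u$ I would apply the classical pointwise Carleman identity: set $w = e^{-s\alpha}\widetilde u$, decompose the conjugated operator $e^{-s\alpha}(\dt - \Delta) e^{s\alpha} = M_1 + M_2 + R$ into self-adjoint, anti-self-adjoint and lower-order parts, take the $L^2(Q)$-norm of the equation for $w$, and extract from the cross-term $2(M_1 w, M_2 w)$ (via integration by parts) the two positive weighted quantities $s^{-1}\iint_Q e^{-2s\alpha}\xi^{-1}|\nabla u|^2\,\dxt$ and $s\iint_Q e^{-2s\alpha}\xi|u|^2\,\dxt$ on the left-hand side, together with a localized observation over $\om_0$. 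Since $\widetilde u|_\Sigma = 0$, no boundary terms arise at this stage; boundary contributions enter only through the extra source $U_t - \Delta U$ generated by the lift, and its weighted $H^{1/2}_x\cap H^{1/4}_t$ norms are by construction controlled by the two anisotropic weighted trace norms $s^{-1/2}\|e^{-s\alpha}\xi^{-1/4}u\|^2_{H^{1/4,1/2}(\Sigma)}$ and $s^{-1/2}\|e^{-s\alpha}\xi^{-1/8}u\|^2_{L^2(\Sigma)}$ appearing in \eqref{eq:Cnonhom}; the exponents $-1/4$ and $-1/8$ are forced by the parabolic scaling $|\dt\alpha|\sim s\,\xi^{9/8}$ and $|\nabla\alpha|\sim \lambda\xi$.

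For the contribution from $F_0$, the naive $L^2$-source bound $\iint e^{-2s\alpha}|F_0|^2$ is sharpened to $s^{-2}\iint e^{-2s\alpha}\xi^{-2}|F_0|^2$ by keeping only the lower-order quantities $s\xi|u|^2$ and $s^{-1}\xi^{-1}|\nabla u|^2$ on the left-hand side: this lets one insert a factor $s^{-1}\xi^{-1}$ via Cauchy--Schwarz and absorb the reciprocal into those very terms. For the divergence-form source, I would not differentiate $F_j$ but instead integrate by parts in $x$: pairing the PDE with the test function $e^{-2s\alpha}\widetilde u$ and moving $\partial_j$ off $F_j$ produces two types of terms, $\iint F_j(\partial_j e^{-2s\alpha})\widetilde u$ and $\iint F_j e^{-2s\alpha}(\partial_j\widetilde u)$, and then Cauchy--Schwarz combined with the pointwise bound $|\partial_j e^{-2s\alpha}|\leq C s\lambda\xi\, e^{-2s\alpha}$ yields $\iint e^{-2s\alpha}|F_j|^2$ on the right-hand side, with the $\widetilde u$-factors absorbed into the left-hand side for $s$ sufficiently large. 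The main obstacle is the bookkeeping: every boundary term produced by integration by parts, every derivative of the weight, and every norm of the trace lift $U$ must land in exactly one of the two weighted anisotropic trace norms on the right-hand side of \eqref{eq:Cnonhom}, with precisely the exponents and powers of $s$ displayed there; matching these coefficients cleanly (in particular the fractional powers $\xi^{-1/4}$ and $\xi^{-1/8}$) is the delicate part of the argument in \cite{ImaPuelYam}.
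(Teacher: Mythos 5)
The first thing to note is that the paper contains no proof of this lemma: it is stated as a result ``proved in \cite{ImaPuelYam}'' and used as a black box, so there is no in-paper argument to measure your attempt against. Judged on its own terms, your outline names the right reference and the right general shape (conjugation by $e^{-s\alpha}$, the $M_1/M_2$ cross-term computation, special treatment of the boundary data and of the divergence-form source), but it defers precisely the two steps that constitute the actual content of the result.

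First, the trace lift. For $u\in L^2(0,T;H^1(\Om))\cap H^1(0,T;H^{-1}(\Om))$ the boundary trace lies in $H^{\frac14,\frac12}(\Sigma)$, and any lift $U$ of it lands (at best) in $L^2(0,T;H^1(\Om))\cap H^{1/2}(0,T;L^2(\Om))$ --- with the regularity you actually wrote, $L^2(0,T;H^{1/2}(\Om))\cap H^{1/4}(0,T;L^2(\Om))$, it is even weaker. In either case $U_t-\Delta U$ is a distributional source, not an element of $L^2(Q)$, so it cannot be inserted into the ``classical pointwise Carleman identity'' as an extra $F_0$; the claim that its weighted norms are ``by construction controlled'' by the two anisotropic trace norms with the factors $s^{-1/2}\xi^{-1/4}$ and $s^{-1/2}\xi^{-1/8}$ is exactly the estimate to be proved, and in \cite{ImaPuelYam} it is obtained by a duality/transposition argument (testing against an adjoint problem and interpolating the resulting boundary pairings), not by lift-and-absorb. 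Second, your handling of $\sum_j\partial_jF_j$ mixes two incompatible arguments: the left-hand side of the Carleman estimate is produced by taking the $L^2(Q)$-norm of the conjugated equation and expanding $2(M_1w,M_2w)$, whereas ``pairing the PDE with $e^{-2s\alpha}\widetilde u$'' is a separate energy identity that yields neither $s\iint e^{-2s\alpha}\xi|u|^2$ nor the local observation term; to keep the divergence structure one must either prove the companion estimate for the adjoint with the gradient term $s^{-1}\iint e^{-2s\alpha}\xi^{-1}|\nabla(\cdot)|^2$ on the left and then dualize, or rewrite $e^{-s\alpha}\partial_jF_j=\partial_j(e^{-s\alpha}F_j)+s(\partial_j\alpha)e^{-s\alpha}F_j$ and redo the integrations by parts tracking the new boundary contributions. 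These are genuine additional arguments, not bookkeeping; absent them, the cleanest course is to do what the paper does and cite \cite{ImaPuelYam}.
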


Recall that
$$\|u\|_{H^{\frac{1}{4},\frac{1}{2}}(\Sigma)}=\left(\|u\|^2_{H^{1/4}(0,T;L^2(\partial\Om))} + \|u\|^2_{L^{2}(0,T;H^{1/2}(\partial\Om))} \right)^{1/2}.$$

The next technical result is a particular case of Lemma 3 in \cite{CorGue}.

\begin{lemma}\label{lemma1}
There exists a constant $\widehat\lambda_1$ such that for any $\lambda\geq \widehat\lambda_1$ there exists $C>0$ depending only on $\lambda$, $\Om$, $\om_0$, $\eta$ and $\ell$ such that, for every $T>0$ and every $u\in L^2(0,T;H^1(\Om))$,
\begin{multline}\label{eq:lemma1}
 s^3\iint\limits_Q e^{-2s\alpha}\xi^3|u|^2\dxt \\
\leq C \left( s \iint\limits_Q e^{-2s\alpha}\xi|\nabla u|^2\dxt + s^3 \int\limits_0^T\int\limits_{\om_0} e^{-2s\alpha}\xi^3|u|^2\dxt \right),
\end{multline}
for every $s\geq C$.
\end{lemma}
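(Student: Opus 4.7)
The inequality \eqref{eq:lemma1} is a weighted Poincar\'e-type estimate that holds for almost every $t\in(0,T)$ separately, so it would suffice to prove it for time-independent $u\in H^1(\Om)$ and integrate in $t$ at the end. The structural facts I would rely on are the identities $\nabla\xi = \lambda\xi\nabla\eta$ and $\nabla\alpha = -\lambda\xi\nabla\eta$ (both read off from \eqref{pesos}), together with the lower bound $|\nabla\eta|\ge c_0>0$ on $\overline{\Om}\setminus\om_0$ that is built into the defining properties of $\eta$. The plan is to use these identities to give the weight $e^{-2s\alpha}\xi^3$ a divergence structure along $\nabla\eta$, and then trade powers of $s\xi$ for derivatives of $u$ via integration by parts.

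Concretely, I would fix a smooth cut-off $\zeta\in C^\infty_c(\om)$ with $0\le\zeta\le 1$ and $\zeta\equiv 1$ on an open set $\tilde\om$ compactly containing $\om_0$, and split the left-hand side as $s^3\iint_Q e^{-2s\alpha}\xi^3 \zeta |u|^2 \dxt + s^3\iint_Q e^{-2s\alpha}\xi^3 (1-\zeta) |u|^2 \dxt$. The first piece is supported in $\om$ and can be reduced to the local observation on $\om_0$ by a routine iteration over nested cut-offs. For the second piece, since $|\nabla\eta|^2\ge c_0$ on its support, it is controlled by the same integrand with an extra factor $|\nabla\eta|^2$. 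The identity $\nabla(e^{-2s\alpha}\xi^2)\cdot\nabla\eta = 2\lambda(s\xi+1)\xi^2|\nabla\eta|^2 e^{-2s\alpha}$ then expresses $s\xi^3|\nabla\eta|^2 e^{-2s\alpha}$, up to lower-order terms in $\xi$, as the divergence of an explicit vector field. Integrating by parts against $s^2(1-\zeta)|u|^2$ produces three families of terms: a boundary-type term localized on $\text{supp}(\nabla\zeta)\subset\om$ (absorbed into the local observation after the same bootstrap); a Cauchy--Schwarz term of the form $s^2\int e^{-2s\alpha}\xi^2 |u||\nabla u|$, which is split into a small multiple of $s^3\int e^{-2s\alpha}\xi^3|u|^2$ absorbed on the left and a multiple of $s\int e^{-2s\alpha}\xi|\nabla u|^2$ kept on the right; and lower-order remainders in $s$ or $\xi$ that are absorbed on the left for $s$ large.

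The main obstacle will be the careful bookkeeping of the exponents of $s$, $\lambda$ and $\xi$ together with the boundary contributions on $\partial\Om$ generated by each integration by parts. These boundary terms vanish because $\eta\equiv 0$ on $\partial\Om$, so $\alpha$ attains its maximum there and $e^{-2s\alpha}$ is super-exponentially small for $s$ large; this is also what allows the statement to impose no boundary condition on $u$. The reduction of the intermediate local term, supported on $\text{supp}(\nabla\zeta)\subset\om$, to a truly local term on $\om_0$ is carried out by a bootstrap with a decreasing sequence of nested cut-offs whose supports are chosen independently of $s$ and $\lambda$, so that the constants remain under control and do not eat up the gains in $s$ obtained from the integrations by parts.
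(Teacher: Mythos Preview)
The paper does not prove this lemma; it records it as ``a particular case of Lemma~3 in \cite{CorGue}'' and moves on. Your direct argument via the divergence structure of the weight is the right idea and is in fact how such weighted Poincar\'e estimates are proved in that reference, so the overall strategy is sound.

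There is, however, a genuine gap in your treatment of the boundary contribution. You assert that the boundary terms produced by the integration by parts ``vanish because $\eta\equiv 0$ on $\partial\Omega$, so $\alpha$ attains its maximum there and $e^{-2s\alpha}$ is super-exponentially small for $s$ large''. This is not correct: every term in \eqref{eq:lemma1} already carries an $e^{-2s\alpha}$ weight, and on $\partial\Omega$ one has $e^{-2s\alpha}=e^{-2s\alpha^*}$, which is \emph{not} small relative to the interior weights. The boundary integral you generate is
\[
\frac{s^2}{2\lambda}\int_{\partial\Omega} e^{-2s\alpha}\,\xi^{2}\,(\nabla\eta\cdot n)\,|u|^{2}\,d\sigma,
\]
and since $u\in H^{1}(\Omega)$ carries no boundary condition, this does not vanish and cannot be made negligible by taking $s$ large. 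What actually saves the argument is a \emph{sign}: because $\eta>0$ in $\Omega$ and $\eta\equiv 0$ on $\partial\Omega$, the outward normal derivative satisfies $\nabla\eta\cdot n\le 0$ on $\partial\Omega$ (in fact $<0$, since $|\nabla\eta|>0$ there and the tangential gradient of $\eta$ vanishes). Hence the boundary term is nonpositive and can simply be discarded in the upper bound. That sign condition, not the size of $e^{-2s\alpha^*}$, is what allows the lemma to hold for $u$ without boundary values.

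A secondary remark: the cut-off $\zeta$ and the ``routine iteration over nested cut-offs'' to reduce from $\omega$ back to $\omega_0$ are unnecessary and, as written, the reduction is not justified. Since the integrand is nonnegative you can split $\Omega$ directly as $\omega_0\cup(\Omega\setminus\omega_0)$, use $|\nabla\eta|\ge c_0$ on the second piece, enlarge the resulting integral back to all of $\Omega$, and integrate by parts once. The only by-products are the boundary term just discussed (dropped by sign), a lower-order $\Delta\eta$ contribution absorbed for $s$ large, and the cross term $s^{2}\!\iint e^{-2s\alpha}\xi^{2}|u|\,|\nabla u|$, which you already know how to split by Young's inequality. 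No bootstrap is needed, and the local term lands on $\omega_0$ directly.
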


The next lemma is an estimate concerning the Laplace operator:

\begin{lemma}\label{lemma2}
 There exists a constant $\widehat\lambda_2$ such that for any $\lambda\geq \widehat\lambda_2$ there exists $C>0$ depending only on $\lambda$, $\Om$, $\om_0$, $\eta$ and $\ell$ such that, for every $u\in L^2(0,T;H^1_0(\Om))$,
\begin{multline}\label{eq:lemma2}
s^6\iint\limits_{Q}e^{-2s\alpha}\xi^6|u|^2 \dxt +  s^4\iint\limits_{Q}e^{-2s\alpha}\xi^4|\nabla u|^2 \dxt \\
\leq C \left( s^3\iint\limits_{Q}e^{-2s\alpha}\xi^3|\Delta u|^2 \dxt + s^6\int\limits_0^T\int\limits_{\om_0}e^{-2s\alpha}\xi^6| u|^2 \dxt \right),
\end{multline}
for every $s\geq C$.
\end{lemma}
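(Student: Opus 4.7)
My plan is to combine an integration by parts based on the Dirichlet condition $u|_{\Sigma}=0$ with the weighted Poincar\'e-type inequality of Lemma \ref{lemma1}. The estimate \eqref{eq:lemma2} will follow by chaining a ``gradient bound'' (controlling $|\nabla u|^2$ by $|\Delta u|^2$ and $|u|^2$) with a ``Poincar\'e-type bound'' (controlling $|u|^2$ by $|\nabla u|^2$ plus the local term), and then closing the resulting circular estimate.

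First, I would multiply $-\Delta u$ by $s^4 e^{-2s\alpha}\xi^4 u$ and integrate over $Q$. Since $u=0$ on $\Sigma$, a space integration by parts gives
\begin{equation*}
s^4\iint\limits_Q e^{-2s\alpha}\xi^4|\nabla u|^2\,\dxt = -s^4\iint\limits_Q e^{-2s\alpha}\xi^4 u\,\Delta u\,\dxt + \tfrac12\, s^4\iint\limits_Q u^2\,\Delta(e^{-2s\alpha}\xi^4)\,\dxt,
\end{equation*}
the last term being obtained by rewriting $2u\nabla u = \nabla(u^2)$ and integrating by parts once more (the boundary contribution vanishes again thanks to $u|_\Sigma=0$). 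The principal part of $\Delta(e^{-2s\alpha}\xi^4)$ is of the order of $s^2\lambda^2|\nabla\eta|^2 e^{-2s\alpha}\xi^6$, so Young's inequality on the $u\,\Delta u$ product, together with absorption of lower-order contributions using $s\xi^*\gg 1$, yields a first bound
\begin{equation*}
s^4\iint\limits_Q e^{-2s\alpha}\xi^4|\nabla u|^2\,\dxt \ \le\ C\Big(s^3\iint\limits_Q e^{-2s\alpha}\xi^3|\Delta u|^2\,\dxt + s^6\iint\limits_Q e^{-2s\alpha}\xi^6|u|^2\,\dxt\Big).
\end{equation*}

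For the complementary Poincar\'e direction, I would apply Lemma \ref{lemma1} to the rescaled function $\widetilde u := s^{3/2}\xi^{3/2}u \in L^2(0,T;H^1(\Omega))$. Computing $|\widetilde u|^2 = s^3\xi^3|u|^2$ and $|\nabla\widetilde u|^2 \le C(s^3\xi^3|\nabla u|^2 + s^3\lambda^2\xi^3|u|^2)$, and absorbing the $s^4\lambda^2\iint e^{-2s\alpha}\xi^4|u|^2$ term that appears into $s^6\iint e^{-2s\alpha}\xi^6|u|^2$ (legitimate for $s\ge s_0(\lambda)$), one obtains a second bound
\begin{equation*}
s^6\iint\limits_Q e^{-2s\alpha}\xi^6|u|^2\,\dxt \ \le\ C\Big(s^4\iint\limits_Q e^{-2s\alpha}\xi^4|\nabla u|^2\,\dxt + s^6\int\limits_0^T\int\limits_{\om_0}e^{-2s\alpha}\xi^6|u|^2\,\dxt\Big).
\end{equation*}

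Plugging the second bound into the right-hand side of the first produces a chain of the form $A \le C_1(D + C_2 A + C_2 L)$, with $A$, $D$, $L$ denoting the gradient, Laplacian and local integrals above. The main obstacle is closing this circular estimate: the composite constant $C_1 C_2$ carries a factor $\lambda^2$ coming from the Laplacian of the weight in step one, so a careful choice of the $\varepsilon$-Young parameters, combined with a suitable selection of $\lambda_0$ and $s_0(\lambda)$, is needed to force $C_1 C_2<1$ and absorb $A$ on the left. Once $A\le C(D+L)$ is secured, the second bound delivers the matching estimate for the $|u|^2$ integral, and summing both gives \eqref{eq:lemma2}. The closure strategy here mirrors analogous higher-weight Carleman arguments developed in \cite{CorGue} and \cite{CarrGue}.
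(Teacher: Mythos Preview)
Your approach differs substantially from the paper's. The paper does not bootstrap from Lemma~\ref{lemma1}; it applies, for each fixed $t\in(0,T)$, the classical Carleman estimate of \cite{FurIma} for the Laplacian with large parameter $\tau(t)=s/\ell^8(t)$, then multiplies the resulting spatial inequality by $\exp(-2se^{2\lambda\|\eta\|_\infty}/\ell^8(t))$ and integrates over $(0,T)$. Since $e^{-2s\alpha}=\exp(-2se^{2\lambda\|\eta\|_\infty}/\ell^8)\,e^{2\tau e^{\lambda\eta}}$ and $s\xi=\tau e^{\lambda\eta}$, this yields \eqref{eq:lemma2} directly, the correct balance of powers being already built into the elliptic estimate.

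Your argument, by contrast, has a real gap at the closure step that you yourself flag as ``the main obstacle''. In your first bound the term $\tfrac12\, s^4\iint_Q u^2\,\Delta(e^{-2s\alpha}\xi^4)\,\dxt$ contributes at leading order a \emph{positive} quantity of size roughly $2\lambda^2\|\nabla\eta\|_\infty^2\, s^6\iint_Q e^{-2s\alpha}\xi^6|u|^2\,\dxt$; hence in $A\le c\,D + C_1' B$ the coefficient $C_1'$ is of order $\lambda^2$ and is \emph{not} reduced by taking $s$ large or by any $\varepsilon$-Young splitting. Your second bound gives $B\le C_2(A+L)$ with $C_2$ the constant of Lemma~\ref{lemma1}, whose $\lambda$-dependence is unspecified there. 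Closure requires $C_1' C_2<1$; with $C_1'\sim\lambda^2$ and $\lambda\ge\widehat\lambda_1$ fixed large, this fails unless one proves $C_2=o(\lambda^{-2})$, which you neither establish nor cite. Appealing to \cite{CorGue} and \cite{CarrGue} does not help: those references obtain the present lemma by the paper's method (the Fursikov--Imanuvilov estimate applied pointwise in $t$), not by your bootstrap, so they do not supply the missing $\lambda$-dependence that your circular argument needs.
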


Inequality \eqref{eq:lemma2} comes from the classical result in \cite{FurIma} for parabolic equations applied to the laplacian with parameter $s/\ell^8(t)$. Then, multiplying by \\$\exp(-2se^{2\lambda\|\eta\|_{\infty}}/\ell^8(t))$ and integrating in $(0,T)$ we obtain \eqref{eq:lemma2}. Details can be found in \cite{CorGue} or \cite{CarrGue}. 

The last technical result concerns the regularity of the solutions to the Stokes system that can be found in \cite{Lady} (see also \cite{Temam}).

\begin{lemma}
For every $T>0$ and every $F\in L^2(Q)^N$, there exists a unique solution 
$$u\in L^2(0,T;H^2(\Om)^N) \cap H^1(0,T;H)$$ 
to the Stokes system
\begin{equation*}
\left\lbrace \begin{array}{ll}
    u_t - \Delta u + \nabla p = F  & \mbox{ in }Q, \\
    \nabla\cdot u = 0 & \mbox{ in }Q, \\
    u = 0 & \mbox{ on }\Sigma, \\
    u(0) = 0 & \mbox{ in }\Om,
 \end{array}\right.
\end{equation*}
for some $p\in L^2(0,T;H^1(\Om))$, and there exists a constant $C>0$ depending only on $\Om$ such that
\begin{equation}\label{eq:regularity1}
\|u\|^2_{L^2(0,T;H^2(\Om)^N)} + \|u\|^2_{H^1(0,T;L^2(\Om)^N)}\leq C \| F\|^2_{L^2(Q)^N}.
\end{equation}
Furthermore, if $F\in L^2(0,T;H^2(\Om)^N)\cap H^1(0,T;L^2(\Om)^N)$, then\\ $u\in L^2(0,T;H^4(\Om)^N)\cap H^1(0,T;H^2(\Om)^N)$ and there exists a constant $C>0$ depending only on $\Om$ such that
\begin{equation}\label{eq:regularity2}
\begin{split}
\|u\|^2_{L^2(0,T;H^4(\Om)^N)} &+ \|u\|^2_{H^1(0,T;H^2(\Om)^N)}  \\ 
&\leq C\left( \| F\|^2_{L^2(0,T;H^2(\Om)^N)} + \| F\|^2_{H^1(0,T;L^2(\Om)^N)}\right).
\end{split}
\end{equation}
\end{lemma}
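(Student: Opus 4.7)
The statement is the classical maximal-regularity result for the Stokes system on a smooth bounded domain, and I would obtain it by reducing to an abstract evolution problem for the Stokes operator. Let $P:L^2(\Om)^N\to H$ denote the Leray projector and $A=-P\Delta$ the Stokes operator, which is an unbounded self-adjoint positive operator on $H$ with compact resolvent and domain $D(A)=V\cap H^2(\Om)^N$. Applying $P$ to the equation kills $\nabla p$ and reduces the system to the abstract Cauchy problem $u_t+Au=PF$ in $H$ with $u(0)=0$. The pressure is recovered a posteriori from $\nabla p=(I-P)(F+\Delta u - u_t)$ together with de Rham's theorem, producing $p\in L^2(0,T;H^1(\Om))$ up to an additive constant.

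For the first estimate \eqref{eq:regularity1}, the plan is the standard Galerkin scheme with the eigenbasis $\{e_k\}$ of $A$. I would construct approximations $u_m\in\mathrm{span}\{e_1,\dots,e_m\}$, test the finite-dimensional equation with $Au_m$, and obtain, after one Cauchy-Schwarz,
$$\tfrac{1}{2}\tfrac{d}{dt}\|A^{1/2}u_m\|_H^2 + \tfrac{1}{2}\|Au_m\|_H^2 \le \tfrac{1}{2}\|PF\|_H^2.$$
Integrating in $t$ yields $\|Au_m\|_{L^2(0,T;H)}^2\le C\|F\|_{L^2(Q)^N}^2$, and writing $\pa_t u_m=PF-Au_m$ gives the $H^1(0,T;H)$ bound. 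The classical stationary elliptic Stokes estimate $\|u\|_{H^2(\Om)^N}\le C\|Au\|_H$ on smooth domains upgrades the $D(A)$-bound to the full $H^2$-bound in space. Passing to the limit $m\to\infty$ (extracting weakly convergent subsequences) yields existence, uniqueness and \eqref{eq:regularity1}.

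For the higher-regularity estimate \eqref{eq:regularity2}, I would differentiate the equation in time. Formally $w:=u_t$ satisfies the same Stokes system with source $F_t\in L^2(Q)^N$ and initial value $w(0)=PF(0)$, and the first estimate applied to $w$ (after subtracting a suitable lift to restore a zero initial condition, using $F\in H^1(0,T;L^2)\hookrightarrow C([0,T];L^2)$ so that $F(0)$ makes sense) delivers $u_t\in L^2(0,T;H^2(\Om)^N)\cap H^1(0,T;L^2(\Om)^N)$. At each fixed time $t$, $u(\cdot,t)$ then solves the stationary Stokes system $-\Delta u+\nabla p = F-u_t$ whose right-hand side lies in $L^2(0,T;H^2(\Om)^N)$; the two-derivative elliptic regularity gain for the stationary Stokes problem on a $C^\infty$ domain then yields $u\in L^2(0,T;H^4(\Om)^N)$, with the quantitative bound \eqref{eq:regularity2}.

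The principal obstacle I expect is the compatibility of the initial data when differentiating in time: one must reconcile $u(0)=0$ with a possibly nonzero source $F(0)$, and ensure the boundary trace of $w=u_t$ is controlled. This is exactly the place where the smoothness of $\pa\Om$ enters and where a careful splitting $u=v+z$ (with $z$ a smooth lift absorbing the inhomogeneity at $t=0$) is needed, as is carried out in the Ladyzhenskaya--Temam framework cited by the author. Everything else amounts to energy estimates and stationary Stokes elliptic regularity, and no new ideas beyond the classical theory are required.
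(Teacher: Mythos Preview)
The paper does not prove this lemma at all: it is stated as a technical result and immediately attributed to the classical references \cite{Lady} and \cite{Temam}, with no argument given. Your outline is essentially the standard proof one finds in those references---Leray projector, Stokes operator $A=-P\Delta$, Galerkin scheme with the eigenbasis of $A$, energy estimate obtained by testing against $Au_m$, then time-differentiation plus stationary Stokes elliptic regularity for the $H^4$ bound---so there is no discrepancy in approach to discuss; you have simply supplied what the paper chose to omit.

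Your identification of the compatibility issue at $t=0$ for the higher-regularity part is the right place to be careful. Note that $F\in L^2(0,T;H^2)\cap H^1(0,T;L^2)$ only gives $F(0)\in H^1$ by interpolation, not $H^2$, so the lift $e^{-tA}PF(0)$ alone may not land in $L^2(0,T;H^4)$; one way around this is to test the equation for $w=u_t$ directly with $Aw$ and absorb $\|A^{1/2}w(0)\|$ via the trace $F\in C([0,T];H^1)$, or alternatively to invoke maximal regularity for the analytic Stokes semigroup. Either route is classical and is what the cited references carry out, so your plan is sound.
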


From now on, we set $N=3$, $i=2$ and $j=1$, i.e.,  we consider a control for the movement equation in \eqref{eq:Bouss} (and \eqref{eq:linearBouss}) of the form $v=(v_1,0,0)$. The arguments can be easily adapted to the general case by interchanging the roles of $i$ and $j$.

\subsection{Proof of Proposition \ref{prop:Carleman3}}

Let us introduce $(w,\pi_w)$, $(z,\pi_z)$ and $\widetilde\psi$, the solutions of the following systems:
 \begin{equation}\label{eq:w}
 \left\lbrace \begin{array}{ll}
     -w_t - \Delta w + \nabla \pi_w = \rho\, g & \mbox{ in }Q, \\
     \nabla\cdot w = 0 & \mbox{ in }Q, \\
     w = 0 & \mbox{ on }\Sigma, \\
     w(T) = 0 & \mbox{ in }\Om,
  \end{array}\right.
 \end{equation}
 \begin{equation}\label{eq:z}
 \left\lbrace \begin{array}{ll}
     -z_t - \Delta z + \nabla \pi_z = -\rho' \vp - \widetilde\psi \nabla \bar\theta & \mbox{ in }Q, \\
     \nabla\cdot z = 0 & \mbox{ in }Q, \\
     z = 0 & \mbox{ on }\Sigma, \\
     z(T) = 0 & \mbox{ in }\Om,
  \end{array}\right.
 \end{equation}
and
\begin{equation}\label{eq:tpsi}
 \left\lbrace \begin{array}{ll}
     -\tpsi_t - \Delta \tpsi = \rho\, g_0 + \rho\, \vp_3 - \rho' \psi & \mbox{ in }Q, \\
     \tpsi = 0 & \mbox{ on }\Sigma, \\
     \tpsi(T) = 0 & \mbox{ in }\Om,
  \end{array}\right.
 \end{equation}
 where $\rho(t)=e^{-\frac{3}{2}s\alpha^*}$. Adding \eqref{eq:w} and \eqref{eq:z}, we see that $(w+z,\pi_w+\pi_z,\tpsi)$ solves the same system as $(\rho\, \vp,\rho\, \pi,\rho\, \psi)$, where $(\vp,\pi,\psi)$ is the solution to \eqref{eq:adjointBouss}. By uniqueness of the Cauchy problem we have
 \begin{equation}\label{w+z}
 \rho\,\vp= w + z, \, \rho\,\pi = \pi_w + \pi_z \mbox{ and }\rho\, \psi = \tpsi.
 \end{equation}

Applying the divergence operator to \eqref{eq:z} we see that $\Delta \pi_z = -\nabla\cdot(\tpsi\nabla\bar\theta)$. We apply now the operator $\nabla\Delta=(\partial_1\Delta,\partial_2\Delta,\partial_3\Delta)$ to the equations satisfied by $z_1$ and $z_3$. We then have
\begin{equation}\label{eq:z1z3}
\begin{split}
-(\nabla\Delta z_1)_t - \Delta (\nabla\Delta z_1)= \nabla \left( \partial_1\nabla\cdot(\tpsi\nabla\bar\theta) - \Delta(\tpsi\partial_1\bar\theta) - \rho'\Delta\vp_1 \right)\mbox{ in }Q,\\
-(\nabla\Delta z_3)_t - \Delta (\nabla\Delta z_3)= \nabla \left( \partial_3\nabla\cdot(\tpsi\nabla\bar\theta) - \Delta(\tpsi\partial_3\bar\theta) - \rho'\Delta\vp_3 \right)\mbox{ in }Q.
\end{split}
\end{equation}

To the equations in \eqref{eq:z1z3}, we apply the Carleman inequality in Lemma \ref{lemma:Cnonhom} with $u=\nabla\Delta z_k$ for $k=1,3$ to obtain
\begin{multline}\label{11}
\sum_{k=1,3}\left[ \frac{1}{s}\iint\limits_Q e^{-2s\alpha}\frac{1}{\xi} |\nabla\nabla\Delta z_k|^2 \dxt
+ s\iint\limits_Q e^{-2s\alpha}\xi |\nabla\Delta z_k|^2 \dxt \right] \\
\leq C \left(  \sum_{k=1,3}\left[ s \int\limits_0^T\int\limits_{\om_0} e^{-2s\alpha}\xi |\nabla\Delta z_k|^2 \dxt  
 +  s^{-1/2} \left\|e^{-s\alpha^*}(\xi^*)^{-1/8} \nabla\Delta z_k \right\|^2_{L^2(\Sigma)^3} \right.\right. \\
\left.+   s^{-1/2} \left\|e^{-s\alpha^*}(\xi^*)^{-1/4} \nabla\Delta z_k \right\|^2_{H^{\frac{1}{4},\frac{1}{2}}(\Sigma)^3} 
+ \iint\limits_Q e^{-2s\alpha} |\rho'|^2 |\Delta \vp_k|^2 \dxt \right]\\
\left.+ \iint\limits_Q e^{-2s\alpha} (\sum_{k,l=1}^3|\partial^2_{kl} \tpsi|^2 + |\nabla \tpsi|^2 + |\tpsi|^2 ) \dxt   \right) ,
\end{multline}
for every $s\geq C$, where $C$ depends also on $\|\bar \theta\|_{L^{\infty}(0,T;W^{3,\infty}(\Om))}$.

Now, by Lemma \ref{lemma1} with $u=\Delta z_k$ for $k=1,3$ we have 
\begin{multline}\label{13}
\sum_{k=1,3} s^3\iint\limits_Q e^{-2s\alpha}\xi^3 |\Delta z_k|^2 \dxt  \\
\leq C \sum_{k=1,3} \left( s\iint\limits_Q e^{-2s\alpha}\xi |\nabla\Delta z_k|^2 \dxt
+ s^3\int\limits_0^T\int\limits_{\om_0} e^{-2s\alpha}\xi^3 |\Delta z_k|^2 \dxt\right),
\end{multline}
for every $s\geq C$, and by Lemma \ref{lemma2} with $u=z_k$ for $k=1,3$:
\begin{multline}\label{12}
\sum_{k=1,3}\left[  s^4\iint\limits_Q e^{-2s\alpha}\xi^4 |\nabla z_k|^2 \dxt 
+ s^6\iint\limits_Q e^{-2s\alpha}\xi^6 | z_k|^2 \dxt \right] \\
\leq C \sum_{k=1,3}\left[  s^3\iint\limits_Q e^{-2s\alpha}\xi^3 |\Delta z_k|^2 \dxt 
+ s^6\int\limits_0^T\int\limits_{\om_0} e^{-2s\alpha}\xi^6 | z_k|^2 \dxt \right],
\end{multline}
for every $s\geq C$.

Combining \eqref{11}, \eqref{13} and \eqref{12} and considering a nonempty open set $\om_1$ such that  $\om_0\Subset \om_1\Subset\om$ we obtain after some integration by parts

\begin{multline}\label{eq:14}
\sum_{k=1,3}\left[ \frac{1}{s}\iint\limits_Q e^{-2s\alpha}\frac{1}{\xi} |\nabla\nabla\Delta z_k|^2 \dxt
+ s\iint\limits_Q e^{-2s\alpha}\xi |\nabla\Delta z_k|^2 \dxt \right. \\
+s^3\iint\limits_Q e^{-2s\alpha}\xi^3 |\Delta z_k|^2 \dxt 
+ s^4\iint\limits_Q e^{-2s\alpha}\xi^4 |\nabla z_k|^2 \dxt 
\left.+ s^6\iint\limits_Q e^{-2s\alpha}\xi^6 | z_k|^2 \dxt \right] \\
\leq C \left(  \sum_{k=1,3}\left[ s^7 \int\limits_0^T\int\limits_{\om_1} e^{-2s\alpha}\xi^7 |z_k|^2 \dxt  
 +  s^{-1/2} \left\|e^{-s\alpha^*}(\xi^*)^{-1/8} \nabla\Delta z_k \right\|^2_{L^2(\Sigma)^3} \right.\right. \\
\left.+   s^{-1/2} \left\|e^{-s\alpha^*}(\xi^*)^{-1/4} \nabla\Delta z_k \right\|^2_{H^{\frac{1}{4},\frac{1}{2}}(\Sigma)^3} 
+ \iint\limits_Q e^{-2s\alpha} |\rho'|^2 |\Delta \vp_k|^2 \dxt \right]\\
\left.+ \iint\limits_Q e^{-2s\alpha} (\sum_{k,l=1}^3|\partial^2_{kl} \tpsi|^2 + |\nabla \tpsi|^2 +  |\tpsi|^2 ) \dxt   \right) ,
\end{multline}
for every $s\geq C$.

Notice that from the identities in \eqref{w+z}, the regularity estimate \eqref{eq:regularity1} for $w$ and $|\rho'|^2\leq Cs^2\rho^2(\xi)^{9/4}$ we obtain for $k=1,3$
\begin{equation*}
\begin{split}
\iint\limits_Q e^{-2s\alpha} |\rho'|^2 &|\Delta \vp_k|^2  \dxt 
= \iint\limits_Q e^{-2s\alpha} |\rho'|^2\rho^{-2} |\Delta (\rho\vp_k)|^2 \dxt \\
&\leq C s^2 \iint\limits_Q e^{-2s\alpha}\xi^{9/4} |\Delta z_k|^2 \dxt 
+ C s^2  \iint\limits_Q e^{-2s\alpha}\xi^{9/4} |\Delta w|^2 \dxt  \\
&\leq C s^2 \iint\limits_Q e^{-2s\alpha}\xi^{3} |\Delta z_k|^2 \dxt 
+ C \|\rho\,g\|^2_{L^2(Q)^3},
\end{split}
\end{equation*}
where we have also used the fact that $s^2e^{-2s\alpha}\xi^{9/4}$ is bounded and $1\leq C\xi^{3/4}$ in $Q$.

Now, from $z|_\Sigma=0$ and the divergence free condition we readily have (notice that $\alpha^*$ and $\xi^*$ do not depend on $x$)
\begin{equation*}
\begin{split}
s^4\iint\limits_Q  e^{-2s\alpha^*}(\xi^*)^4 |z_2|^2 \dxt 
&\leq C s^4\iint\limits_Q  e^{-2s\alpha^*}(\xi^*)^4 |\partial_2 z_2|^2 \dxt \\
&\leq C s^4\iint\limits_Q  e^{-2s\alpha}\xi^4 (|\nabla z_1|^2+|\nabla z_3|^2) \dxt.
\end{split}
\end{equation*}

Using these two last estimates in \eqref{eq:14}, we get
\begin{multline}\label{eq:Carlemannonh}
I(s,z):=\sum_{k=1,3}\left[ \frac{1}{s}\iint\limits_Q e^{-2s\alpha}\frac{1}{\xi} |\nabla\nabla\Delta z_k|^2 \dxt
+ s\iint\limits_Q e^{-2s\alpha}\xi |\nabla\Delta z_k|^2 \dxt \right. \\
+s^3\iint\limits_Q e^{-2s\alpha}\xi^3 |\Delta z_k|^2 \dxt 
+ s^4\iint\limits_Q e^{-2s\alpha}\xi^4 |\nabla z_k|^2 \dxt \\
\left.+ s^6\iint\limits_Q e^{-2s\alpha}\xi^6 | z_k|^2 \dxt \right] + s^4\iint\limits_Q  e^{-2s\alpha^*}(\xi^*)^4 |z_2|^2 \dxt \\
\leq C \left(   \sum_{k=1,3}\left[ s^7 \int\limits_0^T\int\limits_{\om_1} e^{-2s\alpha}\xi^7 |z_k|^2 \dxt  
 +  s^{-1/2} \left\|e^{-s\alpha^*}(\xi^*)^{-1/8} \nabla\Delta z_k \right\|^2_{L^2(\Sigma)^3} \right.\right. \\
\left.+   s^{-1/2} \left\|e^{-s\alpha^*}(\xi^*)^{-1/4} \nabla\Delta z_k \right\|^2_{H^{\frac{1}{4},\frac{1}{2}}(\Sigma)^3}  \right]
+  \|\rho\,g\|^2_{L^2(Q)^3} \\
\left.+ \iint\limits_Q e^{-2s\alpha} (\sum_{k,l=1}^3|\partial^2_{kl} \tpsi|^2 + |\nabla \tpsi|^2 + |\tpsi|^2 ) \dxt   \right) ,
\end{multline}
for every $s\geq C$.
% To obtain \eqref{eq:Carlemannonh} it suffices to combine a Carleman estimate for parabolic equations without boundary conditions proved in \cite{ImaPuelYam} with to estimates that can be deduced from the definition of the weight functions and the classical Carleman estimate for the heat equation. Finally, from the divergence free condition one can obtain the term in $z_2$. For more details, see \cite{CorGue}, \cite{CarrGue} and the references therein.

For equation \eqref{eq:tpsi}, we use the classical Carleman inequality for the heat equation (see for example \cite{FurIma}): there exists $\widehat\lambda_3>0$ such that for any $\lambda >\widehat\lambda_3$ there exists \\ $C(\lambda,\Om,\om_1,\|\bar\theta\|_{L^{\infty}(0,T;W^{3,\infty}(\Om))})>0$ such that
\begin{multline}\label{eq:Carlemanheat}
J(s,\tpsi):=  s\iint\limits_Q e^{-2s\alpha}\xi (|\tpsi_t|^2 + \sum_{k,l=1}^3|\partial^2_{kl} \tpsi|^2 ) \dxt 
+ s^3\iint\limits_Q e^{-2s\alpha}\xi^3 |\nabla \tpsi|^2 \dxt \\
+ s^5\iint\limits_Q e^{-2s\alpha}\xi^5 |\tpsi|^2 \dxt 
\leq C\left(  s^2\iint\limits_Q e^{-2s\alpha}\xi^2 \rho^2(|g_0|^2+|\vp_3|^2) \dxt \right.  \\
\left.+ s^2\iint\limits_Q e^{-2s\alpha}\xi^2 |\rho'|^2|\rho|^{-2}|\tpsi|^2 \dxt 
+ s^5\int\limits_0^T\int\limits_{\om_1}e^{-2s\alpha}\xi^5 |\tpsi|^2 \dxt \right),
\end{multline}
for every $s\geq  C.$

We choose $\lambda_0$ in Proposition \ref{prop:Carleman3} (and Proposition \ref{prop:Carleman2}) to be $\lambda_0 := \max\{\widehat\lambda_0,\widehat\lambda_1,\widehat\lambda_2,\widehat\lambda_3\}$ and we fix $\lambda \geq \lambda_0$.

Combining inequalities \eqref{eq:Carlemannonh} and \eqref{eq:Carlemanheat}, and taking into account that $s^2e^{-2s\alpha}\xi^2 \rho^2$ is bounded, the identities in \eqref{w+z}, estimate \eqref{eq:regularity1} for $w$ and $|\rho'|\leq C s(\xi^*)^{9/8}\rho$ we have
\begin{multline}\label{eq:step0}
I(s,z) + J(s,\tpsi)
\leq C \left( \|\rho\, g\|^2_{L^2(Q)^3}
+ \|\rho\, g_0\|^2_{L^2(Q)}
+ s^5\int\limits_0^T\int\limits_{\om_1}e^{-2s\alpha}\xi^5 |\tpsi|^2 \dxt\right.\\
+ \sum_{k=1,3}\left[s^{-1/2} \left\|e^{-s\alpha^*}(\xi^*)^{-1/8} \nabla\Delta z_k \right\|^2_{L^2(\Sigma)^3} 
+   s^{-1/2} \left\|e^{-s\alpha^*}(\xi^*)^{-1/4} \nabla\Delta z_k \right\|^2_{H^{\frac{1}{4},\frac{1}{2}}(\Sigma)^3} \right. \\
\left.\left.+ s^7 \int\limits_0^T\int\limits_{\om_1} e^{-2s\alpha}\xi^7 |z_k|^2 \dxt \right]\right) , 
\end{multline}
for every $s\geq C$.

It remains to treat the boundary terms of this inequality and to eliminate the local term in $z_3$. 

\textbf{Estimate of the boundary terms.} First, we treat the first boundary term in \eqref{eq:step0}. Notice that, since $\alpha^*$ and $\xi^*$ do not depend on $x$, we can readily get by integration by parts, for $k=1,3$,
\begin{equation*}
\begin{split}
&\left\|e^{-s\alpha^*}\nabla\Delta z_k\right\|^2_{L^2(\Sigma)^3} \\
&\leq C \left\|s^{1/2}e^{-s\alpha^*}(\xi^*)^{1/2}\nabla\Delta z_k\right\|_{L^2(Q)^3}
\left\|s^{-1/2}e^{-s\alpha^*}(\xi^*)^{-1/2}\nabla \nabla\Delta z_k\right\|_{L^2(Q)^3} \\
& \leq C\left( s\iint\limits_Q e^{-2s\alpha^*}\xi^*|\nabla\Delta z_k|^2\dxt + \frac{1}{s}\iint\limits_Q e^{-2s\alpha^*}\frac{1}{\xi^*}|\nabla \nabla\Delta z_k|^2 \dxt \right),
\end{split}
\end{equation*}
so $\|e^{-s\alpha^*}\nabla\Delta z_k\|^2_{L^2(\Sigma)^3}$ is bounded by $I(s,z)$. On the other hand, we can bound the first boundary term as follows:
$$s^{-1/2}\left\|e^{-s\alpha^*}(\xi^*)^{-1/8}\nabla\Delta z_k\right\|^2_{L^2(\Sigma)^3}\leq C s^{-1/2} \left\|e^{-s\alpha^*}\nabla\Delta z_k\right\|^2_{L^2(\Sigma)^3}.$$
Therefore, the first boundary terms can be absorbed by taking $s$ large enough.

Now we treat the second boundary term in the right-hand side of \eqref{eq:step0}. We will use regularity estimates to prove that $z_1$ and $z_3$ multiplied by a certain weight function are regular enough. First, let us observe that from \eqref{w+z} and the regularity estimate \eqref{eq:regularity1} for $w$ we readily have
\begin{equation}\label{eq:1}
\|s^2e^{-s\alpha^*}(\xi^*)^2\rho\,\vp\|^2_{L^2(Q)^3} 
\leq C \left( I(s,z) + \left\|\rho\, g\right\|^2_{L^2(Q)^3}\right).
\end{equation}

We define now
$$\widetilde{z}:=se^{-s\alpha^*}(\xi^*)^{7/8}z,\,\widetilde\pi_z:=se^{-s\alpha^*}(\xi^*)^{7/8}\pi_z.$$
From \eqref{eq:z} we see that $(\widetilde{z},\widetilde\pi_z)$ is the solution of the Stokes system:
\begin{equation}\label{eq:ztilde}
\left\lbrace \begin{array}{ll}
    -\widetilde{z}_t - \Delta \widetilde{z} + \nabla \widetilde \pi_z = R_1 & \mbox{ in }Q, \\
    \nabla\cdot \widetilde{z} = 0 & \mbox{ in }Q, \\
    \widetilde{z} = 0 & \mbox{ on }\Sigma, \\
    \widetilde{z}(T) = 0 & \mbox{ in }\Om,
 \end{array}\right.
\end{equation}
where $R_1:=-se^{-s\alpha^*}(\xi^*)^{7/8}\rho' \vp - se^{-s\alpha^*}(\xi^*)^{7/8}\tpsi\nabla\bar\theta -  (se^{-s\alpha^*}(\xi^*)^{7/8})_t z.$ 
Taking into account that $|\alpha^*_t| \leq C (\xi^*)^{9/8}$, $|\rho'|\leq C s (\xi^*)^{9/8}\,\rho$, \eqref{regularity0} and \eqref{eq:1} we have
$$\| R_1 \|^2_{L^2(Q)^3}
\leq C\left( I(s,z) + J(s,\tpsi) + \|\rho g\|^2_{L^2(Q)^3} \right),$$
and therefore, by the regularity estimate \eqref{eq:regularity1} applied to \eqref{eq:ztilde}, we obtain
\begin{equation}\label{eq:2}
\|\widetilde z\|^2_{L^2(0,T;H^2(\Om)^3)\cap H^1(0,T;L^2(\Om)^3)} 
\leq C \left( I(s,z) + J(s,\tpsi) + \|\rho\, g\|^2_{L^2(Q)^3} \right).
\end{equation}

Next, let
$$\widehat{z}:=e^{-s\alpha^*}(\xi^*)^{-1/4}z,\,\widehat\pi_z:=e^{-s\alpha^*}(\xi^*)^{-1/4}\pi_z.$$
From \eqref{eq:z}, $(\widehat{z},\widehat\pi_z)$ is the solution of the Stokes system:
\begin{equation}\label{eq:zhat}
\left\lbrace \begin{array}{ll}
    -\widehat{z}_t - \Delta \widehat{z} + \nabla \widehat\pi_z = R_2 & \mbox{ in }Q, \\
    \nabla\cdot \widehat{z} = 0 & \mbox{ in }Q, \\
    \widehat{z} = 0 & \mbox{ on }\Sigma, \\
    \widehat{z}(T) = 0 & \mbox{ in }\Om,
 \end{array}\right.
\end{equation}
where $R_2:=-e^{-s\alpha^*}(\xi^*)^{-1/4}\rho' \vp - e^{-s\alpha^*}(\xi^*)^{-1/4}\tpsi\nabla\bar\theta -  (e^{-s\alpha^*}(\xi^*)^{-1/4})_t z$. By the same arguments as before, and thanks to \eqref{eq:2}, we can easily prove that $R_2\in L^2(0,T;H^2(\Om)^3)\cap H^1(0,T;L^2(\Om)^3)$ (for the first term in $R_2$, we use again \eqref{w+z} and \eqref{eq:2}) and furthermore
$$\| R_2 \|^2_{L^2(0,T;H^2(\Om)^3)\cap H^1(0,T;L^2(\Om)^3)}
\leq C \left( I(s,z) + J(s,\tpsi) + \|\rho\, g\|^2_{L^2(Q)^3} \right).$$
By the regularity estimate \eqref{eq:regularity2} applied to \eqref{eq:zhat}, we have
\begin{equation*}
\|\widehat z\|^2_{L^2(0,T;H^4(\Om)^3)\cap H^1(0,T;H^2(\Om)^3)} 
\leq C \left( I(s,z) + J(s,\tpsi) + \|\rho\, g\|^2_{L^2(Q)^3} \right).
\end{equation*}
In particular, $e^{-s\alpha^*}(\xi^*)^{-1/4}\nabla\Delta z_k \in L^2(0,T;H^1(\Om)^3)\cap H^1(0,T;H^{-1}(\Om)^3)$ for $k=1,3$ and
\begin{multline}\label{eq:3}
\sum_{k=1,3}\| e^{-s\alpha^*}(\xi^*)^{-1/4}\nabla\Delta z_k \|^2_{L^2(0,T;H^1(\Om)^3)} 
+ \| e^{-s\alpha^*}(\xi^*)^{-1/4}\nabla\Delta z_k \|^2_{H^1(0,T;H^{-1}(\Om)^3)} \\
\leq C \left( I(s,z) + J(s,\tpsi) + \|\rho\, g\|^2_{L^2(Q)^3} \right).
\end{multline}

To end this part, we use a trace inequality to estimate the second boundary term  in the right-hand side of \eqref{eq:step0}:
\begin{equation*}
\begin{split}
\sum_{k=1,3} s^{-1/2} & \left\|e^{-s\alpha^*}(\xi^*)^{-1/4}\nabla\Delta z_k\right\|^2_{H^{\frac{1}{4},\frac{1}{2}}(\Sigma)^3} \\ 
& \leq  C\,s^{-1/2} \sum_{k=1,3} \left[ \left\| e^{-s\alpha^*}(\xi^*)^{-1/4}\nabla\Delta z_k \right\|^2_{L^2(0,T;H^1(\Om)^3)} \right. \\
&\left. \qquad\qquad\qquad+ \left\| e^{-s\alpha^*}(\xi^*)^{-1/4}\nabla\Delta z_k \right\|^2_{H^1(0,T;H^{-1}(\Om)^3)} \right],
\end{split}
\end{equation*}
By taking $s$ large enough in \eqref{eq:step0}, the boundary terms\\ $s^{-1/2}\|e^{-s\alpha}\xi^{-1/4}\nabla\Delta z_k\|^2_{H^{\frac{1}{4},\frac{1}{2}}(\Sigma)^3}$ can be absorbed by the terms in the left-hand side of \eqref{eq:3}.
%\vskip 1cm

Thus, using  \eqref{w+z} and \eqref{eq:regularity1} for $w$ in the right-hand side of \eqref{eq:step0}, we have for the moment
\begin{multline*}
I(s,z) + J(s,\tpsi)
\leq C \left( \|\rho\, g\|^2_{L^2(Q)^3}
+ \|\rho\, g_0\|^2_{L^2(Q)}
+ s^5\int\limits_0^T\int\limits_{\om_1}e^{-2s\alpha}\xi^5 |\tpsi|^2 \dxt\right.\\
\left.+ s^7 \int\limits_0^T\int\limits_{\om_1} e^{-2s\alpha}\xi^7 \rho^2|\vp_1|^2 \dxt+ 
 s^7 \int\limits_0^T\int\limits_{\om_1} e^{-2s\alpha}\xi^7 \rho^2|\vp_3|^2 \dxt\right), 
\end{multline*}
for every $s\geq C$.
Furthermore, notice that using again \eqref{w+z}, \eqref{eq:regularity1} for $w$ and \eqref{eq:2} we obtain from the previous inequality
\begin{multline}\label{eq:step1}
s^2\iint\limits_Q e^{-2s\alpha^*}(\xi^*)^{7/4}\rho^2 |\vp_{3,t}|^2 \dxt +  \widetilde I(s,\rho\, \vp) + J(s,\tpsi) \\
\leq C \left( \|\rho\, g\|^2_{L^2(Q)^3}
+ \|\rho\, g_0\|^2_{L^2(Q)^3}
+ s^5\int\limits_0^T\int\limits_{\om_1}e^{-2s\alpha}\xi^5 |\tpsi|^2 \dxt\right.\\
\left.+ s^7 \int\limits_0^T\int\limits_{\om_1} e^{-2s\alpha}\xi^7 \rho^2|\vp_1|^2 \dxt
+ s^7 \int\limits_0^T\int\limits_{\om_1} e^{-2s\alpha}\xi^7 \rho^2|\vp_3|^2 \dxt \right) , 
\end{multline}
for every $s\geq C$, where
\begin{multline*}
\widetilde I(s,\rho\,\vp):= 
\sum_{k=1,3}\left[ 
s^3\iint\limits_Q e^{-2s\alpha}\xi^3 \rho^2|\Delta \vp_k|^2 \dxt 
+ s^4\iint\limits_Q e^{-2s\alpha}\xi^4 \rho^2|\nabla \vp_k|^2 \dxt \right.\\
\left.+ s^6\iint\limits_Q e^{-2s\alpha}\xi^6 \rho^2| \vp_k|^2 \dxt \right] + s^4\iint\limits_Q  e^{-2s\alpha^*}(\xi^*)^4 \rho^2|\vp_2|^2 \dxt.   
\end{multline*}

\textbf{Estimate of $\vp_3$.} We deal in this part with the last term in the right-hand side of \eqref{eq:step1}. We introduce a function $\zeta_1\in C^2_0(\om)$ such that $\zeta_1\geq 0$ and $\zeta_1=1$ in $\om_1$, and using equation \eqref{eq:tpsi} we have
\begin{equation*}
\begin{split}
C\,s^7 \int\limits_0^T\int\limits_{\om_1} e^{-2s\alpha}\xi^7 \rho^2 &|\vp_3|^2 \dxt 
\leq C s^7 \int\limits_0^T\int\limits_{\om}\zeta_1 e^{-2s\alpha}\xi^7 \rho^2|\vp_3|^2 \dxt \\
& = C s^7 \int\limits_0^T\int\limits_{\om}\zeta_1 e^{-2s\alpha}\xi^7 \rho\,\vp_3 ( -\tpsi_t - \Delta \tpsi - \rho\, g_0 + \rho'\psi ) \dxt,
\end{split}
\end{equation*}
and we integrate by parts in this last term, in order to estimate it by local integrals of $\tpsi$, $g_0$ and $\epsilon\, I(s,\rho\, \vp)$. This approach was already introduced in \cite{E&S&O&P-N-1}.

We first integrate by parts in time taking into account that\\ $e^{-2s\alpha(0)}\xi^7(0)=e^{-2s\alpha(T)}\xi^7(T)=0$:
\begin{equation*}
\begin{split}
-&C s^7 \int\limits_0^T\int\limits_{\om}\zeta_1 e^{-2s\alpha}\xi^7 \rho\,\vp_3 \tpsi_t  \dxt \\  
&=C s^7 \int\limits_0^T\int\limits_{\om}\zeta_1 e^{-2s\alpha}\xi^7 \rho\,\vp_{3,t} \tpsi  \dxt 
+ C s^7 \int\limits_0^T\int\limits_{\om}\zeta_1 (e^{-2s\alpha}\xi^7 \rho)_t\,\vp_3 \tpsi  \dxt \\
& \leq \epsilon \left( s^2 \int\limits_0^T\int\limits_{\om} e^{-2s\alpha^*}(\xi^*)^{7/4} \rho^2|\vp_{3,t}|^2  \dxt + \widetilde I(s,\rho\,\vp) \right) \\
&\qquad +C(\lambda,\epsilon) \,\left( s^{12} \int\limits_0^T\int\limits_{\om} e^{-4s\alpha+2s\alpha^*}\xi^{49/4} |\tpsi|^2  \dxt 
+ s^{10} \int\limits_0^T\int\limits_{\om} e^{-2s\alpha}\xi^{41/4} |\tpsi|^2  \dxt \right) ,
\end{split}
\end{equation*}
where we have used that $$|(e^{-2s\alpha}\xi^7\rho)_t|\leq C se^{-2s\alpha}\xi^{65/8}\rho$$ and Young's inequality. Now we integrate by parts in space:
\begin{equation*}
\begin{split}
-&C s^7 \int\limits_0^T\int\limits_{\om}\zeta_1 e^{-2s\alpha}\xi^7 \rho\,\vp_3 \Delta\tpsi  \dxt  
= -C s^7 \int\limits_0^T\int\limits_{\om}\zeta_1 e^{-2s\alpha}\xi^7 \rho\,\Delta\vp_3 \tpsi  \dxt \\ 
& -2C s^7 \int\limits_0^T\int\limits_{\om} \nabla(\zeta_1 e^{-2s\alpha}\xi^7)\cdot \rho\,\nabla\vp_3 \tpsi  \dxt 
- C\,s^7 \int\limits_0^T\int\limits_{\om} \Delta(\zeta_1 e^{-2s\alpha}\xi^7) \rho\,\vp_3 \tpsi  \dxt \\
& \leq \epsilon\, \widetilde I(s,\rho\,\vp) + C(\epsilon)\, s^{12} \int\limits_0^T\int\limits_{\om} e^{-2s\alpha}\xi^{12} |\tpsi|^2  \dxt  ,
\end{split}
\end{equation*}
where we have used that 
$$\nabla(\zeta_1e^{-2s\alpha}\xi^7)\leq C se^{-2s\alpha}\xi^8\mbox{ and }\Delta(\zeta_1e^{-2s\alpha}\xi^7)\leq C s^2e^{-2s\alpha}\xi^9,$$
and Young's inequality.

Finally,
\begin{equation*}
\begin{split}
&C s^7 \int\limits_0^T\int\limits_{\om}\zeta_1 e^{-2s\alpha}\xi^7 \rho\,\vp_3 (-\rho\, g_0 + \rho'\, \psi)  \dxt \\
&\leq C s^7 \int\limits_0^T\int\limits_{\om}\zeta_1 e^{-2s\alpha}\xi^7 \rho\,|\vp_3| (\rho|g_0| + Cs\xi^{9/8}\, |\tpsi|)  \dxt \\
& \leq\epsilon\, \widetilde I(s,\rho\,\vp) 
+ C(\epsilon)\left( s^8 \int\limits_0^T\int\limits_{\om} e^{-2s\alpha}\xi^8 \rho^2\, |g_0|^2   \dxt 
+ s^{10} \int\limits_0^T\int\limits_{\om} e^{-2s\alpha}\xi^{41/4} |\tpsi|^2   \dxt \right).
\end{split}
\end{equation*}

Setting $\epsilon=1/2$ and noticing that
$$e^{-2s\alpha}\leq e^{-4s\alpha+2s\alpha^*}\mbox{ in }Q,$$
(see \eqref{pesos}) we obtain \eqref{eq:Carleman3} from \eqref{eq:step1}. This completes the proof of Proposition \ref{prop:Carleman3}.

% % % % % % % % 
% % % % % % % % Null controllability of the linear system
% % % % % % %  
\section{Null controllability of the linear system}

Here we are concerned with the null controllability of the system
\begin{equation}\label{eq:linearBouss2}
\left\lbrace \begin{array}{ll}
    L y  + \nabla p = f + (v_1,0,0)\1_{\om} + \theta\,e_3& \mbox{ in }Q, \\
    L \theta + y\cdot \nabla\bar\theta = f_0 + v_0\1_{\om} & \mbox{ in }Q, \\
    \nabla\cdot y = 0 & \mbox{ in }Q, \\
    y = 0,\, \theta=0 & \mbox{ on }\Sigma, \\
    y(0) = y^0,\, \theta(0)=\theta^0 & \mbox{ in }\Om,
 \end{array}\right.
\end{equation}
where $y^0\in V$, $\theta^0\in H^1_0(\Om)$, $f$ and $f_0$ are in appropriate weighted spaces, the controls $v_0$ and $ v_1$ are in $L^2(\om\times (0,T))$ and 
$$L q= q_t - \Delta q .$$

Before dealing with the null controllability of \eqref{eq:linearBouss2}, we will deduce a Carleman inequality with weights not vanishing at $t=0$. To this end, let us introduce the following weight functions:
\begin{equation}\label{pesos2}
\begin{split}
&\beta(x,t) = \dfrac{e^{2\lambda\|\eta\|_{\infty}}-e^{\lambda\eta(x)}}{\tell^{8}(t)},\, \gamma(x,t)=\dfrac{e^{\lambda\eta(x)}}{\tell^8(t)},\\
&\beta^*(t) = \max_{x\in\overline{\Om}} \beta(x,t),\, \gamma^*(t) = \min_{x\in\overline{\Om}} \gamma(x,t),\\
&\widehat{\beta}(t) = \min_{x\in\overline{\Om}} \beta(x,t),\, \widehat{\gamma}(t) = \max_{x\in\overline{\Om}} \gamma(x,t),
\end{split}
\end{equation}
where
\begin{equation*}
\tell(t)= \left\lbrace
\begin{array}{ll}
\|\ell\|_{\infty} & 0\leq t \leq T/2, \\
\ell(t) & T/2< t \leq T. 
\end{array}
\right.
\end{equation*}
 
\begin{lemma}\label{lemma:newCarleman3}
Assume $N=3$. Let $s$ and $\lambda$ be like in Proposition \ref{prop:Carleman3} and $(\bar p,\bar\theta)$ satisfy \eqref{eq:ncBouss0}-\eqref{regularity0}. Then, there exists a constant $C>0$ (depending on $s$, $\lambda$ and $\bar\theta$) such that every solution $(\vp,\pi,\psi)$ of \eqref{eq:adjointBouss} satisfies:
\begin{multline}\label{eq:newCarleman3}
\iint\limits_{Q}  e^{-5s\beta^*}  (\gamma^*)^4 |\vp|^2 dx\,dt + \iint\limits_{Q}  e^{-5s\beta^*}  (\gamma^*)^5 |\psi|^2 dx\,dt + \|\vp(0)\|^2_{L^2(\Om)^3} + \|\psi(0)\|^2_{L^2(\Om)} \\
\leq  C \left( \iint\limits_{Q}  e^{-3s \beta^*}(|g|^2 +|g_0|^2 ) dx\,dt 
+   \int\limits_0^T\int\limits_{\om}e^{-2s\widehat\beta - 3s\beta^*}\widehat\gamma^7|\vp_1|^2 dx\,dt \right. \\
\left. +    \int\limits_0^T\int\limits_{\om}e^{-4s\widehat\beta - s\beta^*}\widehat\gamma^{49/4}|\psi|^2 dx\,dt \right).
\end{multline}
\end{lemma}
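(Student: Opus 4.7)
The plan is to deduce \eqref{eq:newCarleman3} from the Carleman inequality \eqref{eq:Carleman3} combined with standard backward-in-time energy estimates for the adjoint system \eqref{eq:adjointBouss}, exploiting the specific structure of $\tell$: by construction $\tell\equiv\ell$ on $[T/2,T]$, so $\beta,\gamma,\beta^*,\gamma^*,\widehat\beta,\widehat\gamma$ agree with $\alpha,\xi,\alpha^*,\xi^*,\widehat\alpha,\widehat\xi$ on that subinterval, whereas $\tell\equiv\|\ell\|_\infty$ on $[0,T/2]$, so the modified weights are constants in time there. Since $s$ and $\lambda$ are now fixed as in Proposition \ref{prop:Carleman3}, the $s$-powers appearing in \eqref{eq:Carleman3} are absorbed into the final constant $C$.

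On $[T/2,T]$, every integral on the left of \eqref{eq:newCarleman3} is a fixed constant multiple of the corresponding one in \eqref{eq:Carleman3}, and similarly for the source and local-in-$\om$ integrals on the right, so these pieces are already controlled. On $[0,T/2]$ the weights $\beta^*$ and $\gamma^*$ are constants, so the first two terms of the left-hand side of \eqref{eq:newCarleman3} restricted to that subinterval are bounded by $C\int_0^{T/2}(\|\vp\|_{L^2}^2+\|\psi\|_{L^2}^2)\,dt$. To estimate this together with $\|\vp(0)\|_{L^2}^2+\|\psi(0)\|_{L^2}^2$, I would perform the standard energy estimate: testing \eqref{eq:adjointBouss} against $(\vp,\psi)$, using $\nabla\cdot\vp=0$ to eliminate the pressure, bounding $\int_\Om\psi\,\nabla\bar\theta\cdot\vp\,dx$ via Cauchy-Schwarz and the $L^\infty$-bound on $\nabla\bar\theta$ coming from \eqref{regularity0}, and applying Gronwall's inequality backward from $T/2$ to obtain
\begin{equation*}
\sup_{t\in[0,T/2]}\bigl(\|\vp(t)\|_{L^2}^2+\|\psi(t)\|_{L^2}^2\bigr)\leq C\bigl(\|\vp(T/2)\|_{L^2}^2+\|\psi(T/2)\|_{L^2}^2\bigr)+C\int_0^T(\|g\|_{L^2}^2+\|g_0\|_{L^2}^2)\,dt.
\end{equation*}

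To absorb the remaining midpoint term, I would apply the same differential energy inequality on $[T/2,3T/4]$ and integrate in $t$, trading the pointwise quantity $\|\vp(T/2)\|^2+\|\psi(T/2)\|^2$ for $C\int_{T/2}^{3T/4}(\|\vp\|^2+\|\psi\|^2)\,dt$ plus source terms. On $[T/2,3T/4]$ the function $\ell$ is bounded below by a positive constant (it equals $T-t\geq T/4$ on $[3T/4,T]$ and is at least as large elsewhere on $[T/2,3T/4]$), so the weights $e^{-5s\alpha^*}(\xi^*)^4$ and $e^{-5s\alpha^*}(\xi^*)^5$ are bounded below by a positive constant there, and this time integral is therefore absorbed by the left-hand side of \eqref{eq:Carleman3}. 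The source integral $\int_0^T(\|g\|^2+\|g_0\|^2)\,dt$ is in turn dominated by $\iint_Q e^{-3s\beta^*}(|g|^2+|g_0|^2)\,\dxt$ since $e^{-3s\beta^*}$ is bounded below on $[0,T/2]$ and agrees with $e^{-3s\alpha^*}$ on $[T/2,T]$, where it is already controlled by \eqref{eq:Carleman3}. The step requiring most care is checking that the local-in-$\om$ Carleman weights in \eqref{eq:Carleman3} are dominated by those of \eqref{eq:newCarleman3}: on $[T/2,T]$ they coincide identically, while on $[0,T/2]$ the original weights $e^{-2s\widehat\alpha-3s\alpha^*}(\widehat\xi)^7$ and $e^{-4s\widehat\alpha-s\alpha^*}(\widehat\xi)^{49/4}$ are exponentially small as $t\to 0^+$ (since $\widehat\alpha\to+\infty$), hence trivially dominated by their constant-in-$t$ counterparts appearing in \eqref{eq:newCarleman3}.
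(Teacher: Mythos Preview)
Your approach is essentially the paper's: split $[0,T]$ into $[0,T/2]$ and $[T/2,T]$, use $\beta=\alpha$ on the second piece to invoke \eqref{eq:Carleman3} directly, and treat the first piece by a backward energy estimate whose output is fed back into \eqref{eq:Carleman3} through the interval $[T/2,3T/4]$ where the $\alpha$-weights are bounded below. The paper implements the energy step with a cutoff $\nu\in C^1([0,T])$ equal to $1$ on $[0,T/2]$ and $0$ on $[3T/4,T]$, working with $(\nu\vp,\nu\pi,\nu\psi)$; your Gronwall-plus-time-averaging variant is equivalent.

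One slip to fix: the source integral your energy argument actually produces runs only over $[0,3T/4]$, not $[0,T]$. As you wrote it, the claim that $\int_0^T(\|g\|_{L^2}^2+\|g_0\|_{L^2}^2)\,dt$ is dominated by $\iint_Q e^{-3s\beta^*}(|g|^2+|g_0|^2)\,\dxt$ is false, since $e^{-3s\beta^*}=e^{-3s\alpha^*}\to 0$ as $t\to T^-$; and the phrase ``already controlled by \eqref{eq:Carleman3}'' makes no sense for a term appearing on the \emph{right-hand} side of that inequality. The correct (and simpler) observation, which is precisely what the paper uses, is that $e^{-3s\beta^*}\geq C>0$ on all of $[0,3T/4]$ (constant on $[0,T/2]$, and bounded on $[T/2,3T/4]$ because $\ell$ is bounded away from $0$ there). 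With that correction your argument is complete.
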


Let us also state this result for $N=2$.
\begin{lemma}\label{lemma:newCarleman2}
Assume $N=2$. Let $s$ and $\lambda$ be like in Proposition \ref{prop:Carleman2} and $(\bar p,\bar\theta)$ satisfy \eqref{eq:ncBouss0}-\eqref{regularity0}. Then, there exists a constant $C>0$ (depending on $s$, $\lambda$ and $\bar\theta$) such that every solution $(\vp,\pi,\psi)$ of \eqref{eq:adjointBouss} satisfies:
\begin{multline}\label{eq:newCarleman2}
\iint\limits_{Q}  e^{-5s\beta^*}  (\gamma^*)^4 |\vp|^2 dx\,dt + \iint\limits_{Q}  e^{-5s\beta^*}  (\gamma^*)^5 |\psi|^2 dx\,dt + \|\vp(0)\|^2_{L^2(\Om)^2} + \|\psi(0)\|^2_{L^2(\Om)} \\
\leq  C \left( \iint\limits_{Q}  e^{-3s \beta^*}(|g|^2 +|g_0|^2 ) dx\,dt 
+   \int\limits_0^T\int\limits_{\om}e^{-4s\widehat\beta - s\beta^*}\widehat\gamma^{49/4}|\psi|^2 dx\,dt \right).
\end{multline}
\end{lemma}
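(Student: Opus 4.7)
My plan is to deduce \eqref{eq:newCarleman2} from Proposition \ref{prop:Carleman2} combined with a standard backward energy estimate, following verbatim the scheme used to derive \eqref{eq:newCarleman3} from \eqref{eq:Carleman3}, simplified by the absence of the local term in $\vp_j$ when $N=2$. First, observe that by construction, $\beta=\alpha$, $\gamma=\xi$, $\widehat\beta=\widehat\alpha$ and $\widehat\gamma=\widehat\xi$ on $[T/2,T]$, while on $[0,T/2]$ the quantities $\beta^*$, $\widehat\beta$, $\gamma^*$, $\widehat\gamma$ are all bounded above and below by strictly positive constants (depending on $s$ and $\lambda$). Because $\alpha^*\geq \beta^*$ and $\widehat\alpha\geq \widehat\beta$ everywhere, and because on $[0,T/2]$ the exponential decay of $e^{-s\alpha^*}$ dominates any polynomial in $\widehat\xi$, one checks that the right-hand side of \eqref{eq:Carleman2} is bounded (up to a constant) by the right-hand side of \eqref{eq:newCarleman2}.

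Consequently, Proposition \ref{prop:Carleman2} already controls the contribution of $[T/2,T]$ to the left-hand side of \eqref{eq:newCarleman2}. The remaining task is to estimate $\|\vp(0)\|^2_{L^2(\Om)^2}+\|\psi(0)\|^2_{L^2(\Om)}$ together with the $L^2$-norm of $(\vp,\psi)$ on $\Om\times(0,T/2)$, where the new weights are bounded from above. To this end, I would pick a smooth cutoff $\chi\in C^1([0,T])$ with $\chi\equiv 1$ on $[0,T/2]$ and $\chi\equiv 0$ on $[3T/4,T]$, multiply the first equation of \eqref{eq:adjointBouss} by $\chi^2\vp$ and the second by $\chi^2\psi$, and integrate over $Q$. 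The pressure term drops out by the divergence-free condition and the boundary conditions; the coupling terms $\psi\nabla\bar\theta\cdot\vp$ and $\vp_2\psi$ are absorbed via Cauchy--Schwarz and Gronwall thanks to $\nabla\bar\theta\in L^\infty(Q)^2$ from \eqref{regularity0}. This yields
$$\|\vp(0)\|^2_{L^2(\Om)^2}+\|\psi(0)\|^2_{L^2(\Om)}+\int_0^{T/2}\!\!\int_\Om(|\vp|^2+|\psi|^2)\,dx\,dt \le C\!\left(\int_{T/2}^{3T/4}\!\!\int_\Om(|\vp|^2+|\psi|^2)\,dx\,dt+\iint_Q(|g|^2+|g_0|^2)\,dx\,dt\right).$$

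To close the argument, note that on $[T/2,3T/4]$ the weights $e^{-5s\alpha^*}(\xi^*)^4$ and $e^{-5s\alpha^*}(\xi^*)^5$ are bounded from below by positive constants, so the first term on the right-hand side of the energy estimate is controlled by the preceding application of Proposition \ref{prop:Carleman2}, and hence by the right-hand side of \eqref{eq:newCarleman2}. Moreover $\iint_Q(|g|^2+|g_0|^2)\leq C\iint_Q e^{-3s\beta^*}(|g|^2+|g_0|^2)\,dx\,dt$ since $e^{-3s\beta^*}$ is bounded below by a positive constant depending on $s$ and $\lambda$. Summing the $[0,T/2]$ and $[T/2,T]$ contributions gives \eqref{eq:newCarleman2}. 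The only mild technical point is making the Gronwall step work with the cross-coupling between $\vp$ and $\psi$, which is entirely routine once the regularity \eqref{regularity0} of $\bar\theta$ is invoked; no new ideas beyond those already used for the three-dimensional case in Lemma \ref{lemma:newCarleman3} are required.
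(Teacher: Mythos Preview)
Your approach is correct and mirrors the paper's own proof of Lemma \ref{lemma:newCarleman3} (the paper does not write out a separate proof for the two-dimensional lemma, leaving it as the obvious adaptation). There is, however, one slip in your final paragraph: the claim that $e^{-3s\beta^*}$ is bounded below by a positive constant on all of $(0,T)$ is false, since $\beta^*(t)\to+\infty$ as $t\to T$ (recall $\tilde\ell(t)=\ell(t)=T-t$ near $T$). What saves the argument is that your cutoff $\chi$ vanishes on $[3T/4,T]$, so the energy estimate actually only produces $\int_0^{3T/4}\!\int_\Om(|g|^2+|g_0|^2)\,dx\,dt$, not the full $\iint_Q$; and on $[0,3T/4]$ the weight $e^{-3s\beta^*}$ \emph{is} bounded below by a positive constant. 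This is precisely what the paper invokes in the proof of Lemma \ref{lemma:newCarleman3}. Once you restrict the source integral to $[0,3T/4]$ before inserting the weight, the rest of your argument goes through unchanged.
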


\begin{proof}[Proof of Lemma \ref{lemma:newCarleman3}:] We start by an a priori estimate for system \eqref{eq:adjointBouss}. To do this, we introduce a function $\nu\in C^1([0,T])$ such that
$$\nu \equiv 1 \mbox{ in }[0,T/2],\, \nu \equiv 0 \mbox{ in } [3T/4,T].$$
We easily see that $(\nu\vp,\nu\pi,\nu\psi)$ satisfies
\begin{equation*}
\left\lbrace \begin{array}{ll}
    -(\nu\vp)_t - \Delta (\nu\vp)  + \nabla (\nu\pi) = \nu\,g - (\nu\psi)\nabla\bar\theta - \nu'\vp& \mbox{ in }Q, \\
    -(\nu\psi)_t - \Delta (\nu\psi) = \nu\,g_0 + \nu\,\vp_3 - \nu'\psi & \mbox{ in }Q, \\
    \nabla\cdot (\nu\vp) = 0 & \mbox{ in }Q, \\
    (\nu\vp) = 0,\, (\nu\psi)=0 & \mbox{ on }\Sigma, \\
    (\nu\vp)(T) = 0,\, (\nu\psi)(T)= 0 & \mbox{ in }\Om,
 \end{array}\right.
\end{equation*}
thus we have the energy estimate
\begin{multline*}
\|\nu\vp\|^2_{L^2(0,T;V)} + \|\nu\vp\|^2_{L^{\infty}(0,T;H)} 
+\|\nu\psi\|^2_{L^2(0,T;H^1(\Om))} + \|\nu\psi\|^2_{L^{\infty}(0,T;L^2(\Om)
)} \\
\leq C( \|\nu g\|^2_{L^2(Q)^3} + \|\nu' \vp\|^2_{L^2(Q)^3} + \|\nu g_0\|^2_{L^2(Q)} + \|\nu' \psi\|^2_{L^2(Q)}).
\end{multline*}
Using the properties of the function $\nu$, we readily obtain
\begin{equation*}
\begin{split}
\|\vp  \|^2_{L^2(0,T/2;H)}  &+ \|\vp(0)\|^2_{L^2(\Om)^3} 
+ \|\psi  \|^2_{L^2(0,T/2;L^2(\Om))}  + \|\psi(0)\|^2_{L^2(\Om)}\\
&\leq C\left(  \| g\|^2_{L^2(0,3T/4;L^2(\Om)^3)} + \|\vp\|^2_{L^2(T/2,3T/4;L^2(\Om)^3)}\right.\\
&\qquad \left.+\| g_0\|^2_{L^2(0,3T/4;L^2(\Om))} + \|\psi\|^2_{L^2(T/2,3T/4;L^2(\Om))} \right).
\end{split}
\end{equation*}
From this last inequality, and the fact that
$$e^{-3s\beta^*}\geq C>0, \,\forall t\in [0,3T/4] \mbox{ and }e^{-5s\alpha^*}(\xi^*)^4\geq C >0,\, \forall t\in [T/2,3T/4] $$
we have
\begin{multline}\label{eq:4}
\int\limits_0^{T/2}\int\limits_{\Om} e^{-5s\beta^*}(\gamma^*)^4 |\vp|^2 \dxt 
+\int\limits_0^{T/2}\int\limits_{\Om} e^{-5s\beta^*}(\gamma^*)^5 |\psi|^2 \dxt \\
+\|\vp(0)\|^2_{L^2(\Om)^3}
+\|\psi(0)\|^2_{L^2(\Om)}
\leq C\left( \int\limits_0^{3T/4}\int\limits_{\Om} e^{-3s\beta^*}(|g|^2+|g_0|^2) \dxt \right.\\
\left.+ \int\limits_{T/2}^{3T/4}\int\limits_{\Om} e^{-5s\alpha^*}(\xi^*)^4|\vp|^2 \dxt
+ \int\limits_{T/2}^{3T/4}\int\limits_{\Om} e^{-5s\alpha^*}(\xi^*)^5|\psi|^2 \dxt \right).
\end{multline}
Note that the last two terms in \eqref{eq:4} are bounded by the left-hand side of the Carleman inequality \eqref{eq:Carleman3}. Since $\alpha=\beta$ in $\Om\times (T/2,T)$, we have:
\begin{equation*}
\begin{split}
\int\limits_{T/2}^T\int\limits_{\Om} e^{-5s\beta^*}(\gamma^*)^4 |\vp|^2 \dxt 
+\int\limits_{T/2}^T\int\limits_{\Om} e^{-5s\beta^*}(\gamma^*)^5 |\psi|^2 \dxt\\
= \int\limits_{T/2}^T\int\limits_{\Om} e^{-5s\alpha^*}(\xi^*)^4 |\vp|^2 \dxt
+ \int\limits_{T/2}^T\int\limits_{\Om} e^{-5s\alpha^*}(\xi^*)^5 |\psi|^2 \dxt \\ 
\leq  \iint\limits_Q e^{-5s\alpha^*}(\xi^*)^4|\vp|^2\dxt
+ \iint\limits_Q e^{-5s\alpha^*}(\xi^*)^5 |\psi|^2\dxt .
\end{split}
\end{equation*}
Combining this with the Carleman inequality \eqref{eq:Carleman3}, we deduce
\begin{multline*}
\int\limits_{T/2}^{T}\int\limits_{\Om} e^{-5s\beta^*}(\gamma^*)^4 |\vp|^2 \dxt 
 +\int\limits_{T/2}^T\int\limits_{\Om} e^{-5s\beta^*}(\gamma^*)^5 |\psi|^2 \dxt \\
\leq C\left( \iint\limits_Q e^{-3s\alpha^*}(|g|^2+|g_0|^2) \dxt
+\int\limits_{0}^{T}\int\limits_{\om} e^{-2s\widehat{\alpha}-3s\alpha^*}(\widehat \xi)^7|\vp_1|^2 \dxt \right. \\
\left. + \int\limits_{0}^{T}\int\limits_{\om} e^{-4s\widehat{\alpha}-s\alpha^*}(\widehat \xi)^{49/4}|\psi|^2 \dxt \right).
\end{multline*}
Since 
$$e^{-3s\beta^*}, e^{-2s\widehat{\beta}-3s\beta^*}\widehat{\gamma}^7, e^{-2s\widehat{\beta}-3s\beta^*}\widehat{\gamma}^7, e^{-4s\widehat{\beta}-s\beta^*}\widehat{\gamma}^{49/4} \geq C>0, \,\forall t\in [0,T/2],$$
we can readily get
\begin{multline*}
\int\limits_{T/2}^{T}\int\limits_{\Om} e^{-5s\beta^*}(\gamma^*)^4 |\vp|^2 \dxt
+ \int\limits_{T/2}^{T}\int\limits_{\Om} e^{-5s\beta^*}(\gamma^*)^5 |\psi|^2 \dxt \\
\leq C\left( \iint\limits_Q e^{-3s\beta^*}(|g|^2+|g_0|^2) \dxt 
+ \int\limits_{0}^{T}\int\limits_{\om} e^{-2s\widehat{\beta}-3s\beta^*} \widehat{\gamma}^7 |\vp_1|^2 \dxt \right.\\
+ \left.  \int\limits_{0}^{T}\int\limits_{\om} e^{-4s\widehat{\beta}-s\beta^*} \widehat{\gamma}^{49/4} |\psi|^2 \dxt \right),
\end{multline*}
which, together with \eqref{eq:4}, yields \eqref{eq:newCarleman3}.
\end{proof}

Now we will prove the null controllability of \eqref{eq:linearBouss2}. Actually, we will prove the existence of a solution for this problem in an appropriate weighted space. Let us introduce the space
\begin{equation*}
\begin{array}{l}
E=\{\, (y,p,v_1,\theta,v_0):
 e^{3/2s\beta^*}\,y,\,e^{s\widehat{\beta}+3/2s\beta^*}\widehat \gamma^{-7/2}\,(v_1,0,0)\1_{\om}\in L^2(Q)^3, \\
\noalign{\medskip}\hskip1cm e^{3/2s\beta^*}\,\theta,\,e^{2s\widehat{\beta}+1/2s\beta^*}\widehat \gamma^{-49/8}\,v_0\1_{\om} \in L^2(Q) ,  \\
 \noalign{\medskip}\hskip1cm e^{3/2 s\beta^*}(\gamma^*)^{-9/8}y \in L^2(0,T;H^2(\Om)^3)\cap L^{\infty}(0,T;V),\\
\noalign{\medskip}\hskip1cm e^{3/2 s\beta^*}(\gamma^*)^{-9/8}\theta \in L^2(0,T;H^2(\Om))\cap L^{\infty}(0,T;H^1_0(\Om)),\\
\noalign{\medskip}\hskip1cm \, e^{5/2s\beta^*}(\gamma^*)^{-2}(Ly+\nabla
p - \theta e_3 -(v_1,0,0)\1_{\om}) \in L^2(Q)^3,\\
\noalign{\medskip}\hskip1cm \, e^{5/2s\beta^*}(\gamma^*)^{-5/2}(L\theta+y\cdot \nabla \bar\theta-v_0\1_{\om}) \in L^2(Q)\,\}.
\end{array}
\end{equation*}

It is clear that $E$ is a Banach space for the following norm:
$$
\begin{array}{l}
\displaystyle \|(y,p,v_1,\theta,v_0)\|_{E}=\left( \|e^{3/2s\beta^*}\,y\|^2_{L^2(Q)^3}
+\|e^{s\widehat{\beta}+3/2s\beta^*}\widehat\gamma^{-7/2}\,v_1\1_{\om}\|^2_{L^2(Q)}\right.\\
\qquad\qquad +\|e^{3/2s\beta^*}\,\theta\|^2_{L^2(Q)}
+\|e^{2s\widehat{\beta}+1/2s\beta^*}\widehat\gamma^{-49/8}\,v_0\1_{\om}\|^2_{L^2(Q)}\\
\qquad\qquad + \|e^{3/2 s\beta^*}(\gamma^*)^{-9/8}\,y\|^2_{L^2(0,T;H^2(\Om)^3)} 
\displaystyle +\|e^{3/2 s\beta^*}(\gamma^*)^{-9/8}y\|^2_{L^\infty(0,T;V)}\\
\qquad\qquad + \|e^{3/2 s\beta^*}(\gamma^*)^{-9/8}\,\theta\|^2_{L^2(0,T;H^2(\Om))} 
\displaystyle +\|e^{3/2 s\beta^*}(\gamma^*)^{-9/8}\theta\|^2_{L^\infty(0,T;H^1_0)}\\
\qquad\qquad +\|e^{5/2s\beta^*}(\gamma^*)^{-2}(Ly+\nabla
p - \theta e_3 -(v_1,0,0)\1_{\om})\|^2_{L^2(Q)^3}\\ 
\qquad\qquad\left. +\|e^{5/2s\beta^*}(\gamma^*)^{-5/2}(L\theta+y\cdot \nabla \bar\theta-v_0\1_{\om})\|^2_{L^2(Q)} \right)^{1/2} 
\end{array}
$$

\begin{remark}
Observe in particular that $(y,p,v_1,\theta,v_0)\in E$ implies $y(T)=0$ and $\theta(T)=0$ in $\Om$. Moreover, the functions belonging to this space posses the interesting following property:
$$e^{5/2s\beta^*}(\gamma^*)^{-2}(y\cdot \nabla)y\in L^2(Q)^3\mbox{ and }
e^{5/2s\beta^*}(\gamma^*)^{-5/2}y\cdot \nabla\theta\in L^2(Q).$$
\end{remark}
% %%%%%%%%%%%%%%%%%%%%%%
% % proposition 1
% %%%%%%%%%%%%%%%%%%%%%%

\begin{proposition}\label{prop:null}
Assume $N=3$, $(\bar p,\bar\theta)$ satisfies \eqref{eq:ncBouss0}-\eqref{regularity0} and 
\begin{equation*}
y^0\in V,\,\theta_0 \in H^1_0(\Om),\, e^{5/2s\beta^*}(\gamma^*)^{-2}f\in L^2(Q)^3 \mbox{ and } e^{5/2s\beta^*}(\gamma^*)^{-5/2}f_0\in L^2(Q).
\end{equation*}
Then, we can find controls $v_1$ and $v_0$ such that the associated solution $(y,p,\theta)$ to \eqref{eq:linearBouss2} satisfies \\
$(y,p,v_1,\theta,v_0)\in E$. In particular, $y(T)=0$ and $\theta(T)=0$.
\end{proposition}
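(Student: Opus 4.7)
My plan is to follow the classical Fursikov--Imanuvilov duality method: the weighted observability inequality \eqref{eq:newCarleman3} is dualized through a Lax--Milgram argument to yield a state/control pair with the basic weighted $L^2$-integrability, and the extra parabolic regularity demanded by $E$ is then recovered by a second rescaling followed by Stokes (resp.\ heat) regularity.

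\textbf{Variational construction.} For $U=(\vp,\pi,\psi)$ in
\[
P_0=\{(\vp,\pi,\psi)\in C^2(\overline Q)^3\times C^1(\overline Q)\times C^2(\overline Q):\,\nabla\cdot\vp=0,\ \vp|_\Sigma=0,\ \psi|_\Sigma=0\},
\]
set $\bar g(U)=-\vp_t-\Delta\vp+\nabla\pi+\psi\nabla\bar\theta$ and $\bar g_0(U)=-\psi_t-\Delta\psi-\vp_3$, and introduce the symmetric bilinear form
\begin{align*}
a(\widehat U,U)
&=\iint_Q e^{-3s\beta^*}\bar g(\widehat U)\cdot\bar g(U)\,\dxt
+\iint_Q e^{-3s\beta^*}\bar g_0(\widehat U)\bar g_0(U)\,\dxt\\
&\quad+\int_0^T\!\!\int_\omega e^{-2s\widehat\beta-3s\beta^*}\widehat\gamma^{7}\,\widehat\vp_1\vp_1\,\dxt
+\int_0^T\!\!\int_\omega e^{-4s\widehat\beta-s\beta^*}\widehat\gamma^{49/4}\,\widehat\psi\psi\,\dxt.
\end{align*}
Applied with $g=\bar g(U)$, $g_0=\bar g_0(U)$, Lemma \ref{lemma:newCarleman3} says exactly that $a(U,U)$ dominates $\iint_Q e^{-5s\beta^*}\bigl((\gamma^*)^4|\vp|^2+(\gamma^*)^5|\psi|^2\bigr)\dxt+\|\vp(0)\|^2+\|\psi(0)\|^2$, so $a$ is coercive on the completion $P$ of $P_0$ under the norm $a(\cdot,\cdot)^{1/2}$. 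Under the hypotheses on $(f,f_0,y^0,\theta^0)$, Cauchy--Schwarz combined with this coercivity estimate shows that
\[
\langle\ell,U\rangle=\iint_Q f\cdot\vp\,\dxt+\iint_Q f_0\psi\,\dxt+\int_\Om y^0\cdot\vp(0)\,dx+\int_\Om\theta^0\psi(0)\,dx
\]
is continuous on $P$, so Lax--Milgram produces a unique $\widehat U\in P$ with $a(\widehat U,U)=\langle\ell,U\rangle$ for every $U\in P$.

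\textbf{Identification of the state and first $E$-estimates.} Define
\begin{gather*}
\widehat y=e^{-3s\beta^*}\bar g(\widehat U),\qquad \widehat\theta=e^{-3s\beta^*}\bar g_0(\widehat U),\\
\widehat v_1=-e^{-2s\widehat\beta-3s\beta^*}\widehat\gamma^{7}\widehat\vp_1,\qquad \widehat v_0=-e^{-4s\widehat\beta-s\beta^*}\widehat\gamma^{49/4}\widehat\psi.
\end{gather*}
Testing the variational identity against smooth $U\in P_0$ and integrating by parts in the usual way shows that $(\widehat y,\widehat\theta)$ is a distributional solution of \eqref{eq:linearBouss2} with initial data $(y^0,\theta^0)$, the divergence-free constraint, zero lateral trace, and $\widehat y(T)=0$, $\widehat\theta(T)=0$; a pressure $\widehat p\in\mathcal{D}'(Q)$ is then extracted from de Rham's theorem, since the momentum residual annihilates divergence-free test fields. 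The identity $a(\widehat U,\widehat U)=\langle\ell,\widehat U\rangle<\infty$ directly provides $e^{3s\beta^*/2}\widehat y\in L^2(Q)^3$, $e^{3s\beta^*/2}\widehat\theta\in L^2(Q)$ and the corresponding bounds on $\widehat v_1\1_\om$, $\widehat v_0\1_\om$, i.e.\ the first four terms of the $E$-norm.

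\textbf{Parabolic regularity and main obstacle.} For the remaining $L^2(H^2)\cap L^\infty(V)$ and $L^2(H^2)\cap L^\infty(H^1_0)$ bounds with weight $e^{3s\beta^*/2}(\gamma^*)^{-9/8}$, I would introduce $y^*=e^{3s\beta^*/2}(\gamma^*)^{-9/8}\widehat y$, $p^*=e^{3s\beta^*/2}(\gamma^*)^{-9/8}\widehat p$, $\theta^*=e^{3s\beta^*/2}(\gamma^*)^{-9/8}\widehat\theta$, so that $(y^*,p^*)$ solves the Stokes system
\[
y^*_t-\Delta y^*+\nabla p^*=F^*,\quad \nabla\cdot y^*=0,\quad y^*|_\Sigma=0,\quad y^*(0)=e^{3s\beta^*(0)/2}(\gamma^*(0))^{-9/8}y^0\in V,
\]
with $F^*=e^{3s\beta^*/2}(\gamma^*)^{-9/8}\bigl(f+\widehat\theta e_3+(\widehat v_1,0,0)\1_\om\bigr)+\bigl(e^{3s\beta^*/2}(\gamma^*)^{-9/8}\bigr)_t\widehat y$. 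Using $|(e^{3s\beta^*/2}(\gamma^*)^{-9/8})_t|\leq C e^{3s\beta^*/2}$ and the $L^2$-bounds already obtained, together with the weighted integrability of $f$, one checks $F^*\in L^2(Q)^3$, so that \eqref{eq:regularity1} delivers $y^*\in L^2(0,T;H^2(\Om)^3)\cap H^1(0,T;L^2(\Om)^3)\hookrightarrow L^\infty(0,T;V)$. An entirely analogous scalar argument for $\theta^*$, in which $\widehat y\cdot\nabla\bar\theta$ is treated as an $L^2$-forcing thanks to \eqref{regularity0}, closes the temperature estimate. The central technical difficulty lies precisely in this last step: one must verify that differentiating the rescaling weight produces at most a factor $(\gamma^*)^{9/8}$, which is exactly the gap between the weight $e^{-3s\beta^*}$ generated by the duality step and the weight $e^{3s\beta^*/2}(\gamma^*)^{-9/8}$ required inside $E$. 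It is this single matching condition that fixes the non-obvious exponents $-9/8$, $-7/2$ and $-49/8$ appearing in the definition of $E$.
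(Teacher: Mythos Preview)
Your proposal is correct and follows essentially the same Fursikov--Imanuvilov duality approach as the paper: you build the same bilinear form on the same smooth class $P_0$, invoke Lax--Milgram via the observability inequality \eqref{eq:newCarleman3}, define $(\widehat y,\widehat\theta,\widehat v_1,\widehat v_0)$ by the same formulas, and then recover the extra $L^2(H^2)\cap L^\infty(V)$ (resp.\ $L^2(H^2)\cap L^\infty(H^1_0)$) regularity by rescaling with $e^{3s\beta^*/2}(\gamma^*)^{-9/8}$ and applying Stokes/heat regularity. The paper's sketch and your write-up are in full agreement, including the key observation that differentiating the rescaling weight costs at most a factor $(\gamma^*)^{9/8}$, which is precisely what the exponent $-9/8$ in $E$ is designed to absorb.
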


\begin{proof}[ Sketch of the proof:]
The proof of this proposition is very similar to the one of Proposition 2
in \cite{Gue} (see also Proposition 2 in \cite{E&S&O&P} and Proposition 3.3 in \cite{CarrGue}), so we will just give the main ideas.

Following the arguments in \cite{FurIma} and \cite{OlegN-S}, we introduce the space
$$P_0=\{\,(\chi,\sigma,\kappa)\in C^2(\overline Q)^{5}:\nabla\cdot \chi=0,\, \chi=0 \mbox{ on }
\Sigma,\, \kappa = 0 \mbox{ on }\Sigma \}$$
and we consider the following variational problem: find $(\widehat\chi,\widehat\sigma,\widehat\kappa)\in P_0$ such that
\begin{equation}\label{48p}
a((\widehat \chi,\widehat \sigma,\widehat\kappa),(\chi,\sigma,\kappa))=\langle G,(\chi,\sigma,\kappa)\rangle \quad
\forall (\chi,\sigma,\kappa) \in P_0,
\end{equation}
where we have used the notations
$$\begin{array}{l}
\displaystyle a((\widehat \chi,\widehat \sigma,\widehat\kappa),(\chi,\sigma,\kappa))
=\iint\limits_Q
e^{-3s\beta^*}\,(L^*\widehat \chi+\nabla \widehat \sigma + \widehat\kappa\nabla\bar\theta)\cdot(L^*\chi+\nabla \sigma + \kappa\nabla\bar\theta)\,\dxt\\ 
\displaystyle + \iint\limits_Q e^{-3s\beta^*}\,(L^*\widehat \kappa-\widehat\chi_3)(L^*\kappa-\chi_3)\,\dxt
+ \int\limits_0^T\int\limits_{\om} e^{-2s\widehat{\beta}-3s\beta^*}\widehat \gamma^7\,\widehat \chi_1\,\chi_1\,\dxt \\ 
\noalign{\medskip}\displaystyle
\hskip 5cm + \int\limits_0^T\int\limits_{\om} e^{-4s\widehat{\beta}-s\beta^*}\widehat \gamma^{49/4}\,\widehat \kappa\,\kappa\,\dxt,
\end{array}$$
$$\langle G,(\chi,\sigma,\kappa)\rangle =\iint\limits_{Q}  f\cdot \chi\,\dxt 
+ \iint\limits_{Q}  f_0\, \kappa
\,\dxt + \int\limits_{\Omega}y^0\cdot \chi(0)\,dx 
+ \int\limits_{\Omega}\theta^0\, \kappa(0)\,dx$$
and $L^*$ is the adjoint operator of $L$, i.e.
   $$
L^*q = -q_t - \Delta q.
   $$

   It is clear that $a(\cdot\,,\cdot\,,\cdot):P_0\times P_0\mapsto\R$ is a symmetric,
definite positive bilinear form on $P_0$. We denote by $P$ the completion of $P_0$ for the norm induced by $a(\cdot\,,\cdot\,,\cdot)$. Then $a(\cdot\,,\cdot\,,\cdot)$ is well-defined, continuous and again definite positive on $P$. Furthermore, in view of the Carleman estimate \eqref{eq:newCarleman3}, the linear form $(\chi,\sigma,\kappa) \mapsto \langle G,(\chi,\sigma,\kappa)\rangle$ is well-defined and continuous on $P$.
   Hence, from Lax-Milgram's lemma, we deduce that the
variational problem
   \begin{equation*}
   \left\{
   \begin{array}{l}
   \displaystyle a((\widehat \chi,\widehat \sigma,\widehat\kappa),(\chi,\sigma,\kappa))=\langle
   G,(\chi,\sigma,\kappa)\rangle
   \\ \noalign{\medskip}\displaystyle
   \forall (\chi,\sigma,\kappa) \in P, \quad (\widehat \chi,\widehat \sigma,\widehat\kappa) \in P,
   \end{array}
   \right.
   \end{equation*}
   possesses exactly one solution $(\widehat \chi,\widehat \sigma,\widehat\kappa)$.

Let $\widehat y$, $\widehat v_1$, $\widehat\theta $ and $\widehat v_0 $ be given by 
\begin{equation*}
\left\{\begin{array}{ll} \displaystyle \widehat
y=e^{-3s\beta^*}(L^*\widehat \chi+\nabla \widehat \sigma + \widehat\kappa \nabla \bar\theta),&\mbox{ in }Q,
\\ \noalign{\smallskip}\displaystyle
\widehat v_1=-e^{-2s\widehat{\beta}-3s\beta^*}\widehat\gamma^7\,\widehat \chi_1,&\mbox{ in }\om\times (0,T),\\
\widehat\theta = e^{-3s\beta^*}(L^* \widehat\kappa - \widehat\chi_3),&\mbox{ in }Q,\\
\widehat v_0 = - e^{-4s\widehat{\beta}-s\beta^*}\widehat\gamma^{49/4}\,\widehat \kappa,&\mbox{ in }\om\times (0,T).
\end{array}\right.
\end{equation*}
Then, it is readily seen that they satisfy
\begin{multline*}
\iint\limits_{Q}e^{3s\beta^*} |\widehat y|^2\dxt 
+ \iint\limits_{Q}e^{3s\beta^*} |\widehat \theta|^2\dxt 
+ \int\limits_0^T\int\limits_{\om} e^{2s\widehat{\beta}+3s\beta^*}\widehat \gamma^{-7} |\widehat v_1|^2\dxt \\
+\int\limits_0^T\int\limits_{\om} e^{4s\widehat{\beta}+s\beta^*}\widehat \gamma^{-49/4} |\widehat v_0|^2\dxt = a((\widehat \chi,\widehat \sigma,\widehat\kappa),(\widehat \chi,\widehat \sigma,\kappa))< +\infty
\end{multline*}
and also that $(\widehat y,\widehat\theta)$ is, together with some pressure $\widehat p$, the weak solution of the system \eqref{eq:linearBouss2} for $v_1=\widehat v_1$ and $v_0=\widehat v_0$.

It only remains to check that 
$$e^{3/2s\beta^*}(\gamma^*)^{-9/8}\widehat y\in L^2(0,T;H^2(\Om)^3)\cap
L^{\infty}(0,T;V)$$
and
$$e^{3/2s\beta^*}(\gamma^*)^{-9/8}\widehat \theta\in L^2(0,T;H^2(\Om))\cap
L^{\infty}(0,T;H^1_0(\Om))$$
To this end, we define the functions
$$y^*=e^{3/2 s\beta^*}(\gamma^*)^{-9/8}\,\widehat y,\,
p^*=e^{3/2 s\beta^*}(\gamma^*)^{-9/8}\,\widehat p,\,
\theta^*=e^{3/2 s\beta^*}(\gamma^*)^{-9/8}\,\widehat \theta$$
$$f^*=e^{3/2 s\beta^*}(\gamma^*)^{-9/8}(f+(\widehat v_1,0,0)\1_{ \om})\mbox{ and }
f_0^*=e^{3/2 s\beta^*}(\gamma^*)^{-9/8}(f_0+\widehat v_0\1_{ \om}).$$ 
Then
$(y^*,p^*,\theta^*)$ satisfies 
\begin{equation*}
\left\lbrace \begin{array}{ll} 
    L y^*  + \nabla p^* = f^*  + \theta^*\,e_3 + (e^{3/2 s\beta^*}(\gamma^*)^{-9/8})_t \widehat y & \mbox{ in }Q, \\
    L \theta^* + y^*\cdot \nabla\bar\theta = f_0^* + (e^{3/2 s\beta^*}(\gamma^*)^{-9/8})_t \widehat \theta  & \mbox{ in }Q, \\
    \nabla\cdot y^* = 0 & \mbox{ in }Q, \\
    y^* = 0,\, \theta^*=0 & \mbox{ on }\Sigma, \\
    y^*(0) = e^{3/2 s\beta^*(0)}(\gamma^*(0))^{-9/8}y^0,\,  & \mbox{ in }\Om,\\
\theta^*(0)=e^{3/2 s\beta^*(0)}(\gamma^*(0))^{-9/8}\theta^0,\,  & \mbox{ in }\Om.
 \end{array}\right.
\end{equation*}
From the fact that $f^*+(e^{3/2 s\beta^*}(\gamma^*)^{-9/8})_t\,\widehat y \in L^2(Q)^3$, $f_0^* + (e^{3/2 s\beta^*}(\gamma^*)^{-9/8})_t \widehat\theta \in L^2(Q)$, $y^0\in V$ and $\theta^0\in H^1_0(\Om)$, we have indeed
$$y^*\in L^2(0,T;H^2(\Om)^3)\cap L^{\infty}(0,T;V) \mbox{ and }
 \theta^*\in L^2(0,T;H^2(\Om))\cap L^{\infty}(0,T;H^1_0(\Om))  $$
(see \eqref{eq:regularity1}). This ends the sketch of the proof of Proposition \ref{prop:null}.
\end{proof}
% 
% % % % % % % % % % % % % % % % % % 
% % % % % % % % % % % % % % % % % % Proof of theorem 1
% % % % % % % % % % % % % % % % % % 
\section{Proof of Theorem \ref{teo:controlBouss}}
 
In this section we give the proof of Theorem \ref{teo:controlBouss} using similar arguments to those in \cite{OlegN-S} (see also \cite{E&S&O&P}, \cite{E&S&O&P-N-1}, \cite{Gue} and \cite{CarrGue}). The result of null controllability for the linear system \eqref{eq:linearBouss2} given by Proposition \ref{prop:null} will allow us to apply an inverse mapping theorem. Namely, we will use the following theorem (see \cite{ATF}).

\begin{theorem}\label{teo:invmap}
Let $B_1$ and $B_2$ be two Banach spaces and let $\mathcal{A}:B_1 \to B_2$ satisfy $\mathcal{A}\in C^1(B_1;B_2)$. Assume that $b_1\in B_1$, $\mathcal{A}(b_1)=b_2$ and that $\mathcal{A}'(b_1):B_1 \to B_2$ is surjective. Then, there exists $\delta >0$ such that, for every $b'\in B_2$ satisfying $\|b'-b_2\|_{B_2}< \delta$, there exists a solution of the equation
$$\mathcal{A} (b) = b',\quad b\in B_1.$$
\end{theorem}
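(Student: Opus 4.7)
The plan is to prove this Lyusternik-Graves type theorem by setting up a Newton/Picard iteration whose fixed point yields the desired preimage. After translating, we can assume without loss of generality that $b_1 = 0$ and $b_2 = 0$. Write $\Lambda = \mathcal{A}'(0) \in \mathcal{L}(B_1, B_2)$, which is surjective by hypothesis, and put $G(h) := \mathcal{A}(h) - \Lambda h$, so that $G(0) = 0$, $G \in C^1(B_1; B_2)$, and $G'(h) = \mathcal{A}'(h) - \Lambda$. In these terms the equation $\mathcal{A}(b) = b'$ is equivalent to $\Lambda h = b' - G(h)$.

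The key tool is the open mapping theorem: since $\Lambda: B_1 \to B_2$ is a bounded surjection between Banach spaces, $\Lambda$ is open, so there exists a constant $M > 0$ such that for every $y \in B_2$ we can pick some $x \in B_1$ with $\Lambda x = y$ and $\|x\|_{B_1} \le M \|y\|_{B_2}$. I would use this to construct the iteration as follows: set $h_0 = 0$, and having defined $h_n$, choose $h_{n+1} \in B_1$ such that
\begin{equation*}
\Lambda h_{n+1} = b' - G(h_n), \qquad \|h_{n+1}\|_{B_1} \le M \|b' - G(h_n)\|_{B_2}.
\end{equation*}
If this sequence is Cauchy in $B_1$, its limit $h^*$ will satisfy $\Lambda h^* = b' - G(h^*)$, i.e.\ $\mathcal{A}(h^*) = b'$.

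To obtain convergence, I would exploit that $\mathcal{A}$ is $C^1$ at $0$: given any $\eta > 0$, there is $r > 0$ with $\|G'(h)\|_{\mathcal{L}(B_1,B_2)} \le \eta$ whenever $\|h\|_{B_1} \le r$. Choose $\eta$ so small that $M\eta \le 1/2$, and then pick $\delta := r/(4M)$. For $\|b'\|_{B_2} < \delta$, an induction using the mean value inequality
\begin{equation*}
\|G(h_n) - G(h_{n-1})\|_{B_2} \le \sup_{t \in [0,1]} \|G'(h_{n-1} + t(h_n - h_{n-1}))\| \cdot \|h_n - h_{n-1}\|_{B_1}
\end{equation*}
shows that $\|h_{n+1} - h_n\|_{B_1} \le M\eta \, \|h_n - h_{n-1}\|_{B_1} \le 2^{-n} \|h_1\|_{B_1}$, while keeping all iterates inside the ball of radius $r$. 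The sequence therefore converges to some $h^* \in B_1$, and passing to the limit (using continuity of $\mathcal{A}$) gives $\mathcal{A}(h^*) = b'$.

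The step I expect to be the main technical obstacle is the construction of the non-linear bounded right-inverse of $\Lambda$: the open mapping theorem yields existence of an admissible preimage with controlled norm for each $y$, but this is a selection, not a continuous linear operator (one cannot in general find a continuous linear section unless $\ker \Lambda$ is complemented). One must therefore be careful that the iterates $h_{n+1}$ are merely chosen, not defined by a fixed formula; this is harmless since only the quantitative bound $\|h_{n+1}\| \le M \|b' - G(h_n)\|$ is used in the contraction estimate. Everything else is a routine induction verifying that the iterates remain in the small ball and form a Cauchy sequence, and then a passage to the limit; the conclusion of the theorem follows at once.
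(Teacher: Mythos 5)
The paper offers no proof of this statement at all: it is quoted directly from the reference \cite{ATF} of Alekseev, Tikhomirov and Fomin, so there is no in-paper argument to compare against, and your proposal must be judged on its own. Your overall strategy --- the classical proof of the Lyusternik--Graves theorem by successive approximations, using the open mapping theorem to produce, for each $y\in B_2$, some preimage $x$ with $\Lambda x=y$ and $\|x\|_{B_1}\le M\|y\|_{B_2}$ --- is the standard and correct route. However, the iteration as you have written it contains a genuine gap. You choose $h_{n+1}$ subject only to $\Lambda h_{n+1}=b'-G(h_n)$ and $\|h_{n+1}\|_{B_1}\le M\|b'-G(h_n)\|_{B_2}$. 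This controls the norm of $h_{n+1}$ itself, not of the increment $h_{n+1}-h_n$. From $\Lambda(h_{n+1}-h_n)=G(h_{n-1})-G(h_n)$ you cannot deduce $\|h_{n+1}-h_n\|_{B_1}\le M\|G(h_{n-1})-G(h_n)\|_{B_2}$: when $\ker\Lambda\neq\{0\}$ the two independently selected preimages may differ by a large kernel element, so the sequence stays bounded but need not be Cauchy (already for $G\equiv 0$ your scheme may pick a different preimage of $b'$ at every step). Your closing remark that ``only the quantitative bound $\|h_{n+1}\|\le M\|b'-G(h_n)\|$ is used in the contraction estimate'' is precisely where the argument breaks: that bound is not the one the contraction estimate requires.

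The repair is standard and small: build the iterates by increments. Given $h_{n-1}$ and $h_n$, use the open-mapping selection to choose $d_{n+1}$ with $\Lambda d_{n+1}=G(h_{n-1})-G(h_n)$ and $\|d_{n+1}\|_{B_1}\le M\|G(h_{n-1})-G(h_n)\|_{B_2}$, and set $h_{n+1}=h_n+d_{n+1}$; then $\Lambda h_{n+1}=b'-G(h_n)$ still holds, and now the mean value inequality gives $\|d_{n+1}\|_{B_1}\le M\eta\,\|d_n\|_{B_1}\le\frac{1}{2}\|d_n\|_{B_1}$, so that $h_n=\sum_{k\le n}d_k$ is Cauchy and, with your choice $\delta=r/(4M)$, all iterates remain in the ball of radius $r$ where $\|G'\|\le\eta$. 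Passing to the limit using the continuity of $\Lambda$ and $G$ yields $\mathcal{A}(h^*)=b'$. With this modification your proof is complete and agrees with the classical argument behind the cited theorem.
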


Let us set
$$y=\widetilde y,\,  p = \bar p + \widetilde p \mbox{ and }\theta = \bar\theta + \widetilde\theta.$$
Using \eqref{eq:Bouss} and \eqref{eq:ncBouss0} we obtain
\begin{equation}\label{eq:Bouss2}
\left\lbrace \begin{array}{ll}
    \widetilde y_t - \Delta \widetilde y + (\widetilde y\cdot \nabla)\widetilde y + \nabla\widetilde p = v\1_{\om} +\widetilde \theta\,e_N& \mbox{ in }Q, \\
    \widetilde \theta_t - \Delta\widetilde \theta +\widetilde y\cdot \nabla\widetilde\theta +\widetilde y\cdot \nabla\bar\theta= v_0\1_{\om} & \mbox{ in }Q, \\
    \nabla\cdot\widetilde y = 0 & \mbox{ in }Q, \\
    \widetilde y = 0,\,\widetilde \theta=0 & \mbox{ on }\Sigma, \\
    \widetilde y(0) = y^0,\,\widetilde \theta(0)= \theta^0 - \bar\theta^0 & \mbox{ in }\Om.
 \end{array}\right.
\end{equation}
Thus, we have reduced our problem to the local null controllability of the nonlinear system \eqref{eq:Bouss2}.

We apply Theorem \ref{teo:invmap} setting
 $$B_1 = E,$$ $$B_2 = L^2(e^{5/2 s\beta^*}(\gamma^*)^{-2}(0,T);L^2(\Om)^3) \times V
\times L^2(e^{5/2 s\beta^*}(\gamma^*)^{-5/2}(0,T);L^2(\Om)) \times H^1_0(\Om)$$ 
and the operator
\begin{equation*}
\begin{split}
\mathcal{A}(\widetilde y,\widetilde p,v_1,\widetilde \theta,v_0) = (L\widetilde y + (\widetilde y\cdot \nabla)\widetilde y +\nabla \widetilde p - \widetilde \theta\,e_3 - (v_1,0,0)\1_{\om}&,\,
\widetilde y(0),\\
L \widetilde \theta + \widetilde y\cdot \nabla \widetilde \theta + \widetilde y\cdot\nabla\bar\theta - v_0\1_{\om}&,\,
\widetilde \theta(0))
\end{split}
\end{equation*}
for $(\widetilde y,\widetilde p,v_1,\widetilde \theta,v_0)\in E$.

In order to apply Theorem \ref{teo:invmap}, it remains to check that the operator $\mathcal{A}$ is of class $C^1(B_1;B_2)$. Indeed, notice that all the terms in $\mathcal{A}$ are linear, except for $(\widetilde y\cdot \nabla)\widetilde y$ and $\widetilde y\cdot \nabla \widetilde \theta$. We will prove that the bilinear operator
$$((y^1,p^1,v^1_1,\theta^1,v^1_0),(y^2,p^2,v^2_1,\theta^2,v^2_0))\to(y^1\cdot \nabla)y^2$$
is continuous from $B_1\times B_1$ to $ L^2(e^{5/2 s\beta^*}(\gamma^*)^{-2}(0,T);L^2(\Om)^3)$. To do this, notice that $$e^{3/2 s\beta^*}(\gamma^*)^{-9/8}y \in L^2(0,T;H^2(\Om)^3)\cap L^{\infty}(0,T;V)$$
 for any $(y,p,v_1,\theta,v_0)\in B_1$, so we have
$$e^{3/2 s\beta^*}(\gamma^*)^{-9/8}y \in L^2(0,T;L^{\infty}(\Om)^3)$$
and  
$$\nabla (e^{3/2 s\beta^*}(\gamma^*)^{-9/8}y) \in L^{\infty}(0,T;L^2(\Om)^3).$$
Consequently, we obtain
\begin{equation*}
\begin{split}
&\|e^{5/2 s\beta^*}(\gamma^*)^{-2}(y^1\cdot \nabla)y^2\|_{L^2(Q)^3} \\ 
&\leq C  \|(e^{3/2 s\beta^*}(\gamma^*)^{-9/8}\,y^1\cdot \nabla)e^{3/2 s\beta^*}(\gamma^*)^{-9/8}\,y^2\|_{L^2(Q)^3} \\
&\leq C \|e^{3/2 s\beta^*}(\gamma^*)^{-9/8}y^1\|_{L^2(0,T;L^{\infty}(\Om)^3)}\, \|e^{3/2 s\beta^*}(\gamma^*)^{-9/8}y^2\|_{L^{\infty}(0,T;V)}.
\end{split} 
\end{equation*}

In the same way, we can prove that the bilinear operator
$$((y^1,p^1,v^1_1,\theta^1,v^1_0),(y^2,p^2,v^2_1,\theta^2,v^2_0))\to y^1\cdot \nabla\theta^2$$
is continuous from $B_1\times B_1$ to $ L^2(e^{5/2 s\beta^*}(\gamma^*)^{-5/2}(0,T);L^2(\Om))$ just by taking into account that 
$$ e^{3/2 s\beta^*}(\gamma^*)^{-9/8}\theta \in L^{\infty}(0,T;H^1_0(\Om)),$$
 for any $(y,p,v_1,\theta,v_0)\in B_1$.

\vskip.5cm

Notice that $\mathcal{A}'(0,0,0,0,0):B_1\to B_2$ is given by
\begin{equation*}
\begin{split}\mathcal{A}'(0,0,0,0,0)(\widetilde y,\widetilde p,v_1,\widetilde \theta,v_0) = (L\widetilde y +\nabla \widetilde p - \widetilde \theta\,e_3 - (v_1,0,0)\1_{\om}&,\,
\widetilde y(0),\\
L \widetilde \theta + \widetilde y\cdot\nabla\bar\theta - v_0\1_{\om}&,\,
\widetilde \theta(0)),
\end{split}
\end{equation*}
for all $(\widetilde y,\widetilde p,v_1,\widetilde \theta,v_0)\in B_1,$
so this functional is surjective in view of the null controllability result for the linear system \eqref{eq:linearBouss2} given by Proposition \ref{prop:null}.

We are now able to apply Theorem \ref{teo:invmap} for $b_1=(0,0,0,0,0)$ and $b_2=(0,0,0,0)$. In particular, this gives  the existence of a positive number $\delta>0$ such that, if $\|\widetilde y(0),\widetilde \theta(0)\|_{V\times H^1_0(\Om)}\leq \delta$, then we can find controls $v_1$ and $v_0$ such that the associated solution $(\widetilde y,\widetilde p,\widetilde \theta)$ to \eqref{eq:Bouss2} satisfies $\widetilde y(T)=0$ and $\widetilde \theta(T)=0$ in $\Om$.

This concludes the proof of Theorem \ref{teo:controlBouss}.

%For acknowledgements section, please don't number the section, please begin it with \section*{Acknowledgements}
\section*{Acknowledgments} The author would like to thank the ``Agence Nationale de la Recherche'' (ANR), Project CISIFS, grant ANR-09-BLAN-0213-02, for partially supporting this work.

\end{document}